\documentclass[12pt,twoside]{article}

\usepackage{graphicx,graphics, amsthm, amsfonts, amsbsy,amssymb, amsmath, cite, array,arydshln, hyperref, float,tikz,standalone}
\usepackage[utf8]{inputenc}
\usetikzlibrary{shapes.geometric,arrows.meta,positioning}

\usepackage[T1]{fontenc}
\usepackage[utf8]{inputenc}

\everymath{\displaystyle}

\let\OLDthebibliography\thebibliography
\renewcommand\thebibliography[1]{
	\OLDthebibliography{#1}
	\setlength{\parskip}{3pt}
	\setlength{\itemsep}{0pt plus 0.3ex}
}

\usepackage[margin=1in]{geometry}
\usepackage[small,compact]{titlesec}

\allowdisplaybreaks

\newtheorem{theorem}{Theorem}[section]

\newtheorem{problem}{Problem}

\newtheorem{proposition}[theorem]{Proposition}

\newtheorem{conjecture}{Conjecture}

\renewenvironment{proof}{{\noindent\bfseries Proof.}}{\qed}

\providecommand{\keywords}[1]
{
	\noindent\textbf{Keywords:} #1
}
\providecommand{\ams}[2]
{
	\noindent\textbf{2020 AMS subject classification:} #2
}

\def\qed{\nolinebreak\hfill\rule{.2cm}{.2cm}\par\addvspace{.5cm}}

\begin{document}
\title{Combinatorial and analytic aspects of independence polynomials of zero divisor graphs}
\author{Bilal Ahmad Rather\\
	{\em\small School of Mathematics and Statistics, Shandong University of Technology,}\\
	{\em \small Zibo 255049, China}\\
	\texttt{bilalahmadrr@gmail.com}
				}
				
\date{}

\pagestyle{myheadings} \markboth{Bilal Ahmad Rather}{Combinatorial and analytic aspects of independence polynomials of zero divisor graphs}
\maketitle

\begin{abstract}
The independence polynomial of a graph encapsulates all independent sets of differing sizes, a task classified as NP-hard in theoretical computer science. This article examines the independence polynomial of zero divisor graphs in commutative rings. We demonstrate that the independent sets, represented as a sequence of coefficients of the independence polynomial, exhibit unimodality and log-concavity. Therefore, for the independence polynomial of some zero divisor graphs, the unimodal conjecture is true. Additionally, the characteristics of the zeros of the independence polynomial are delineated, along with their corresponding annular regions on the plane. 
\end{abstract} 
 
\keywords{Zero divisor graphs, Independence polynomial;  NP-hard problem; Zeros; Unimodal; Log-concave}

\ams{}{05C25, 05C31, 68R05, 05C69, 26C10, 68Q25.}
\noindent {\footnotesize ACM classification: F.2.2}
\section{Introduction}
All graph sconsidered in this article are finite, simple, connected and undirected.  A graph is denoted as $G=G(V(G),E(G))$, where $V(G)$ is the vertex set, and $E(G)$ is the edge set. The cardinal number $|E(G)|$ is the size $m,$ and $|V(G)| $ is the order $n$ of $G$. The number of edges incident on a vertex $ v_{i}\in V(G) $ is the \textit{degree} of $v_{i}$, and is denoted as $ d_{v_{i}}(G) $.  The \textit{union} of two graphs $ G_{1}$ and $ G_{2}$ with disjoint vertex sets $V_{1}, $ and $ V_{2}$, and disjoint edge sets $E_{1}$ and $E_{2}$,  denoted by $ G_{1}\cup G_{2}$,  is defined as a graph with vertex set $ V_{1}\cup V_{2} $ and edge set $ E_{1}\cup E_{2}. $ 
The join of two graphs $ G_{1}$ and $ G_{2}$, denoted by $G_{1}\vee G_{2}$, is a graph with vertex set $V_{1}\cup V_{2}$, and edge set $E_{1}\cup E_{2}, $ together with edges $\big\{\{u,v\} : u\in V(G_{1}), v\in V(G_{2}) \big\}.$\vskip 1mm

A polynomial $ p(x)=\sum_{i=0}^{n} \ell_{i}x^{i} $ is said to be \textit{unimodal}, if the coefficient sequence $\Big\{\ell_{i}\Big\}_{i=0}^{n}$ form a unimodal sequence, that is, we can find a positive integer $ p~(0\leq p\leq n), $ such that $ \ell_{0}\leq \ell_{1}\leq \dots \leq \ell_{p} \geq \ell_{p+1}\geq \dots \geq \ell_{n} $. So, the coefficients of $p(x)$ increase to a certain value, and thereafter  decreases. The polynomial $ p(x) $ is said to be \emph{log-concave} if 
\begin{equation}\label{log con 1}
	\ell_{j}^{2}\geq \ell_{j-1}\ell_{j+1},\quad \text{for all}\quad  1\leq j\leq n-1. 
\end{equation} 

A positive log-concave sequence is known to be unimodal \cite{stanley}. However, if $\ell_{i}=0$ implies that either $\ell_{1}=\dots=\ell_{i-1}=0$ or $\ell_{i+1}=\dots=\ell_{n}=0,$ (no internal zeros), then a non-negative log-concave sequence is unimodal \cite{stanley}. In topological graph theory, log-concavity of sequences is connected with surface embedding of graphs, comparable to log-concavity of genus polynomials of graphs \cite{gross}. The polynomial $ 1 + 2x + 3x^{2} + 4x^{3} + 3x^{4} + 2x^{5}+x^{6} $ is unimodal, while $ 12 + 7x + 9x^{2} + 16x^{3} + 4x^{4} $ is not. A log-concave polynomial is $p(x) = x^5 + 5x^4 + 10x^3 + 10x^2 + 5x + 1,$ while $q(x) = x^5 + 10x^4 + 5x^3 + 5x^2 + 10x + 1$ is not.
Many polynomials related to graph invariants like matching \cite{heilmann}, chromatic \cite{huh}, and reliability polynomials \cite{huh1}, are thought to exhibit unimodal coefficient (absolute value) sequences under different base expansions. However, there are still conjectures open related to independence polynomial of graphs \cite{alavi,levitejc,micheal,browncameron}.
 These polynomial characteristics are mostly investigated for graph polynomials, such as the independent domination polynomial \cite{bilaltcs, bilaljcmcc}, dominating polynomial, matching polynomial, clique polynomial, and several others \cite{huh1,gross, bilaldml,bilalsc,bilaltcs,bilaljac}. 

\medskip

If $ R $ is a commutative ring $1\neq 0$, a non-zero element $ x\in R $ is a zero divisor of $ R $ if there is a non-zero $ y\in R$ such that $ x\cdot y=0. $ Beck \cite{ib} introduced the concept of zero divisor graphs to study the coloring of rings. Beck defined zero divisor graph $\varGamma(R)$, with vertices as zero divisors including identity $0$, and connected two distinct vertices if and only if their product is zero. So, it follows that $0$ is adjacent to all vertices of $\varGamma(R)$. 
Anderson and Livingston \cite{al}, omitting $ 0 $ in $\varGamma(R)$, connected two non-zero zero divisors if and only if their product is zero.
A vast literature is available on zero divisor graphs and their applications.  Anderson and Weber \cite{anderson2019} characterized all zero divisor graphs with at most $14$ vertices.  In \cite{alikhanizd}, the authors established the domination polynomial of the zero divisor graph of rings. Gürsoy et al.  \cite{gursoy} introduced the idea of independent domination polynomials for zero divisor graphs in $\mathbb{Z}_{n}$ for specific values of $n.$ The theory of independent domination polynomial theory is recently developed in \cite{bilaltcs}. The zero divisor graphs are universal graphs, that is, for an integer modulo ring $\mathbb{Z}_{n}$ (or even in boolean ring), given any finite graph of order $n^{\prime}$, there is some $n$ such that $G$ is an induced subgraph of zero divisor graph $ \varGamma(\mathbb{Z}_{n})$ \cite{arunkumara,Andersonbook}. Some interesting properties of zero divisor graphs were studied in \cite{al,asir2}. We will revise basic properties of zero divisor graphs of commutative ring $\mathbb{Z}_{n}.$

We will discuss the structure of $\varGamma(\mathbb{Z}_{n})$. If  $ \rho_{i}$ be the proper divisor of $n$, then we consider the sets
$$ V_{\rho_{i}}= \{ x\in \mathbb{Z}_{n} : \gcd(x,n)=\rho_{i} \}, \quad \text{for}\quad  1\leq i \leq k, $$
where $k$ is some positive integer other than $1$ and $n.$  Also, note that $ \gcd(x,n) $ is the greatest common divisor of $ x $ and $ n. $
Also, $ V_{\rho_{i}} \cap V_{\rho_{j}}=\emptyset$ for $ i\neq j $. Thus, the vertex set of  $\varGamma(\mathbb{Z}_{n})$  can be partitioned as $ V_{\rho_{1}}, V_{\rho_{2}}, \dots, V_{\rho_{k}}$. Also,  in graph $\varGamma(\mathbb{Z}_{n})$, the vertices of $ V_{\rho_{i}} $ are adjacent to vertices of $ V_{\rho_{j}}  $  if and only if $ \rho_{i}\rho_{j}$ is multiple of $n.$ The cardinality of $V_{\rho_{i}} $ is $\varphi\left( \tfrac{n}{\rho_{i}}\right)$, for $ 1\leq i \leq k $, where $\varphi$ is the Euler's Totient function.  In addition, the induced subgraph of $ V_{\rho_{i}} $ is either a clique or its complement.
Furthermore, the induced subgraph of $ \varGamma(V_{\rho_i}) $ is $ K_{\varphi\left (\frac{n}{\rho_{i}}\right )} $ if $ \rho_{i}^{2} $ divides $ n, $ otherwise it is $\overline{K}_{\varphi\left (\frac{n}{\rho_{i}}\right )}$. A complete graph is denoted by $K_{n},$ and its complement by $\overline{K}_{n}.$

The article is organized as: Section \ref{section 2} gives the independence polynomial of $\varGamma(\mathbb{Z}_{n})$ for $n\in \{p,p^{2},p^{3},pq\}$, where $p<q$ are primes. Also, we find their unimodal, log-concave properties and discuss their zeros. In Section \ref{section 3}, we give the independence polynomial of $\varGamma(\mathbb{Z}_{p^{2}q})$, investigate its zero, log-concave and unimodal properties. In Section \ref{section 4}, we present the independence polynomial of $\varGamma(\mathbb{Z}_{pqr})$, where $p<q<r$ are primes. We give bounds for its zeros and illustrate with examples.

\section{Independence polynomial of zero divisor graph of integral modulo ring}\label{section 2}
An \emph{independent set} in a graph $G$ is a set of vertices no two of which are adjacent. For a graph $G,$ the cardinality of the maximum size of an independent set, is called the \emph{independence number} of $G$, and is denoted by $\alpha(G)$. The associated \emph{decision problem}
$$
\text{Independent set}=\{(G,k): \alpha(G)\ge k\},
$$
is one of the classical NP-complete problems, and the optimization problem of computing $\alpha(G)$ is therefore NP-hard, where $k$ is a positive integer. Karp showed in his seminal list of NP-complete problems that the independent set problem is NP-complete, and a fundamental problem \cite{karp1972,garey1979}.  Since, the independent set problem is equivalent (under complementation) to clique set in $G$, known inapproximability bounds for clique number transfer immediately. It is NP-hard to approximate the maximum independent set within a factor of $n^{1-\varepsilon}$ for any fixed $\varepsilon>0$, unless $\mathrm{P}=\mathrm{NP}$ \cite{Hastad99}.   The NP-completeness of independent set implies that, unless $\mathrm{P}=\mathrm{NP}$, there is no polynomial-time algorithm that computes $\alpha(G)$ on general graphs. Moreover, the strong inapproximability results rule out polynomial-time algorithms with approximation ratio $n^{1-\varepsilon}, \varepsilon>0$, placing the problem among the most intractable maximization problems from the perspective of worst-case approximation.

Let $s_k(G)$ be the number of independent sets of size $k$ in $G$. Then the \emph{independence polynomial} of $G$ is the generating function of possible independent sets, and is given as
$$
I(G,x)\;=\;\sum_{k=0}^{\alpha(G)} s_k(G)\,x^k.
$$
The polynomial $I(G,x)$ was introduced and studied systematically in the 1980s and has since become a standard enumerative invariant with connections to statistical physics (the hard–core model), combinatorial optimization, and algebraic graph theory \cite{gutman,levitsurvey}. Evaluating $I(G,x)$ at special points recovers several natural quantities. In particular, $I(G,1)=\sum_{k\ge 0}s_k(G)$ counts all independent sets of $G$, while $I(G,0)=1$ and $I'(G,0)=s_1(G)=|V(G)|$. For $x>0$, $I(G,x)$ is (up to a trivial change of variable) the partition function of the hardcore lattice gas on $G$ with activity $x$; this perspective underlies many structural and analytic results \cite{Weitz}.
If $G$ and $H$ are vertex disjoint graphs, then
$$
I(G\cup H,x)\;=\;I(G,x)\,I(H,x),
$$
where $G\cup H$ is the union of graphs. The above expression follows as an independent set in $G\cup H$ is precisely the disjoint union of an independent set of $G$ and one of $H$ \cite{gutman,levitsurvey, levitdiscrete,haynes}. Let $L(G)$ be the line graph of $G$ and $m_k(G)$ be the number of $k$-matchings of $G$, then we have
$$
\sum_{k\ge 0} m_k(G)\,x^k\;=\;I\bigl(L(G),x\bigr).
$$
Thus, many properties of independence polynomials transfer to matching generating functions, and vice versa \cite{levitsurvey}.
The coefficient sequence $\bigl(s_0(G),s_1(G),\dots,s_{\alpha(G)}(G)\bigr)$ is strictly positive and obeys basic combinatorial inequalities. In particular, when $I(G,x)$ has only real zeros, then the Newton's inequalities imply that the coefficient sequence is log-concave, and hence unimodal \cite{levitsurvey}. If $G$ is claw-free (that is, contains no induced star $K_{1,3}$), then $I(G,x)$ has only real zeros \cite{ChudnovskySeymour2007}. Consequently its coefficients are log-concave and unimodal. This result is sharp, in general graphs, independence polynomials need not be real-rooted. Active topics include root location for broader graph classes, stability and zero-free regions motivated by statistical physics, extremal problems for the coefficient sequences, and algorithmic approximation on sparse graph families. Surveys such as \cite{levitsurvey,goddard} provide a comprehensive entry point and extensive bibliographies. A graph of order $n$ is very well-covered if every maximal independent set has size  $\tfrac{n}{2}$, like the regular complete bipartite graphs $K_{m,m}$ is very well-covered.
A graph is well-covered graphs, if all its maximal independent sets have the same size (complete graphs and the cycle are examples).
In \cite{brown}, the authors conjectured that the independence coefficients of well-covered graphs were unimodal, and showed that every graph 
can be embedded as an induced subgraph of such a well-covered graph. However, Michael and Traves \cite{micheal} later disproved the conjecture. A conjecture due to Alavi et al. \cite{alavi} that is still open is that the independence polynomial of a tree is unimodal. Levit and Mandrescu \cite{levitejc} conjectured that the independence polynomial of every very well-covered graphs are unimodal, and to date, the conjecture remains open. Though there are progress for certain families of graphs \cite{micheal,browncameron}. 
The independence polynomial of $ G_{1}\vee G_{2} $ ((see, \cite{gutman}) is 
\begin{equation}\label{eq join}
	I(G_{1}\vee G_{2},x)=I(G_{1},x)+I(G_{2},x)-1.
\end{equation} With this information and the fact that the independence polynomial of $\overline{K}_{n}$ is $(1+x)^{n}$, and that of $K_{n}$ is $1+nx$. Next, we discuss the independence polynomial of zero divisor graphs of ring $\mathbb{Z}_{n}$ for certain values of $n.$
\vskip 2mm

For  $n=p$, the ring is $\mathbb{Z}_p$, which is a field, since $p$ is prime. A field has no zero divisors other than 0. The set of vertices of the zero divisor graph $\varGamma(\mathbb{Z}_p)$ is the set of non-zero zero divisors, which is an empty graph. 
The independence polynomial is $I(\varGamma(\mathbb{Z}_p), x) = i_0 x^0 = 1$.

For  $n=p^2$, the set of non-zero zero divisors in $\mathbb{Z}_{p^2}$ are the elements $a \in \{1, \dots, p^2-1\}$ such that $\gcd(a, p^2) > 1$. This is equivalent to $p\mid a$, where $\mid$ means divide. The vertex set is $V = \{kp \mid 1 \le k \le p-1\}$, and  $|V|=p-1$. Let $v_1 = k_1 p$ and $v_2 = k_2 p$ be two distinct vertices in $V$, where $1 \le k_1, k_2 \le p-1$ and $k_1 \neq k_2$. Their product is $v_1 v_2 = (k_1 p)(k_2 p) = k_1 k_2 p^2 \equiv 0 \pmod{p^2}$.
Thus, every pair of distinct vertices is adjacent. The graph $\varGamma(\mathbb{Z}_{p^2})$ is the complete graph $K_{p-1}$.
The independence polynomial of $\varGamma(\mathbb{Z}_{p^2}) $ is:
$$I(\varGamma(\mathbb{Z}_{p^2}), x) = i_0 x^0 + i_1 x^1 = 1 + (p-1)x.$$

For $n=p^{3}$, we have the following proposition.
\begin{proposition}
	Let $p$ be a prime and $\varGamma(\mathbb{Z}_{p^3})$ be zero divisor graph of $\mathbb{Z}_{p^3}$. Then, the independence polynomial of $\varGamma(\mathbb{Z}_{p^3})$ is
	$$I(\varGamma(\mathbb{Z}_{p^3}), x) = (1+x)^{p^2-p} + (1+(p-1)x) - 1 = (1+x)^{p^2-p} + (p-1)x.$$
\end{proposition}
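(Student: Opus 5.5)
We will determine the structure of $\varGamma(\mathbb{Z}_{p^3})$ through the partition by $\gcd$ described in the introduction, and then apply the join formula \eqref{eq join}. The proper divisors of $p^3$ are $\rho_1=p$ and $\rho_2=p^2$, so the vertex set splits as $V_p\cup V_{p^2}$.

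\emph{Sizes of the parts.} We have $|V_p|=\varphi(p^2)=p^2-p$, which consists of the multiples $kp$ with $p\nmid k$. We also have $|V_{p^2}|=\varphi(p)=p-1$, which consists of the elements $kp^2$ with $1\le k\le p-1$.

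\emph{Adjacencies.} Two elements of $V_p$ multiply to $k_1k_2p^2$, which is not divisible by $p^3$ since $p\nmid k_1k_2$. Equivalently, $p^2\nmid p^3$ fails in the sense needed, namely $\rho_1^2=p^2$ but $p^2\cdot$(unit parts) is not a multiple of $p^3$. Hence $V_p$ induces $\overline{K}_{p^2-p}$. Two elements of $V_{p^2}$ multiply to a multiple of $p^4$, so $V_{p^2}$ induces $K_{p-1}$. A vertex $k_1p\in V_p$ and a vertex $k_2p^2\in V_{p^2}$ multiply to $k_1k_2p^3\equiv 0$, so every vertex of $V_p$ is adjacent to every vertex of $V_{p^2}$. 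This gives
$$\varGamma(\mathbb{Z}_{p^3})\cong \overline{K}_{p^2-p}\vee K_{p-1}.$$

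\emph{The polynomial.} By \eqref{eq join}, together with $I(\overline{K}_m,x)=(1+x)^m$ and $I(K_m,x)=1+mx$, we obtain
$$I(\varGamma(\mathbb{Z}_{p^3}),x)=(1+x)^{p^2-p}+(1+(p-1)x)-1=(1+x)^{p^2-p}+(p-1)x.$$

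There is no real obstacle in this argument. The only care needed is with the induced-subgraph rule. The criterion ``$\rho_i^2\mid n$ implies a clique'' as written in the introduction would wrongly make $V_p$ a clique, since $p^2\mid p^3$. The correct test is whether $\rho_i^2$ is a multiple of $n$, which is why I verify the adjacencies directly from the products rather than quoting that rule.
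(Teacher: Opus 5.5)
Your proposal is correct and follows essentially the same route as the paper: split the vertices by $\gcd$ into $V_p$ and $V_{p^2}$, and check directly from the products that these induce $\overline{K}_{p^2-p}$ and $K_{p-1}$ with every cross edge present. Then apply \eqref{eq join}. Your caution about the introduction's clique criterion is justified, since the correct test is $n \mid \rho_i^2$ rather than $\rho_i^2 \mid n$, and the paper's proof likewise verifies the adjacencies by direct computation; you could, however, delete the garbled ``Equivalently, $p^2\nmid p^3$ fails\ldots'' sentence, which adds nothing to the direct product argument.
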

\begin{proof}
	For $n=p^3$, the set of non-zero zero divisors in $\mathbb{Z}_{p^3}$ are the multiples of $p$ in $\{1, \dots, p^3-1\}$. We partition the vertex set $V$ into two disjoint subsets: $V_1 = \{a \in V \mid \gcd(a, p^3)=p\} = \{kp \mid 1 \le k < p^2, p \nmid k\}$, and  $V_2 = \{a \in V \mid \gcd(a, p^3)=p^2\} = \{kp^2 \mid 1 \le k < p\}$, where $\nmid$ represents does not divide. The sizes of these sets are:  $|V_1| = (p^2-1) - (p-1) = p^2-p$ and  $|V_2| = p-1$.  Let $a=k_1 p$ and $b=k_2 p$ be distinct vertices in $V_1$, where $p \nmid k_1$ and $p \nmid k_2$. Then their product is $ab = k_1 k_2 p^2$. Since $p$ is prime, it does not divide $k_1$ or $k_2$, and $p \nmid k_1 k_2$. Thus, $ab \not\equiv 0 \pmod{p^3}$, and it follows that there are no edges between any two vertices in $V_1$. The subgraph induced by $V_1$ is an edgeless graph $\overline{K}_{p^2-p}$. Let $a=k_1 p^2$ and $b=k_2 p^2$ be distinct vertices in $V_2$. Then their product is $ab = k_1 k_2 p^4 = (k_1 k_2 p) p^3 \equiv 0 \pmod{p^3}$. Thus, every pair of distinct vertices in $V_2$ is adjacent. So, the subgraph induced by $V_2$ is a complete graph $K_{p-1}$. Let $a=k_1 p \in V_1$ and $b=k_2 p^2 \in V_2$. Their product is $ab = (k_1 p)(k_2 p^2) = k_1 k_2 p^3 \equiv 0 \pmod{p^3}$. So, if follows that every vertex in $V_1$ is adjacent to every vertex in $V_2$. Thus, $ \varGamma(\mathbb{Z}_{p^3})\cong K_{|V_{2}|}\vee \overline{K}_{|V_{1}|}.$ Keeping in mind the independence polynomial of $\overline{K}_{n}$ and $K_{n}$, with \eqref{eq join}, we have
	$$I(\varGamma(\mathbb{Z}_{p^3}), x) = (1+x)^{p^2-p} + (1+(p-1)x) - 1 = (1+x)^{p^2-p} + (p-1)x.$$
\end{proof}

The following result gives unimodal and log-concave property of $I(\varGamma(\mathbb{Z}_{p^3}), x).$
\begin{theorem}
	The independence polynomial $I(\varGamma(\mathbb{Z}_{p^3}), x)$ of the zero divisor graph $ \varGamma(\mathbb{Z}_{p^3})$, with prime $p$ is log-concave and  unimodal.
\end{theorem}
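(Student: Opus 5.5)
Let $N=p^2-p$ and write $I(\varGamma(\mathbb{Z}_{p^3}),x)=\sum_{k=0}^{N}\ell_k x^k$. By the preceding Proposition, $\ell_0=1$, $\ell_1=N+p-1=p^2-1$, and $\ell_k=\binom{N}{k}$ for $2\le k\le N$. All coefficients are positive, so there are no internal zeros. By the remark in the introduction (a positive log-concave sequence is unimodal \cite{stanley}), it suffices to prove log-concavity, i.e.\ $\ell_j^2\ge \ell_{j-1}\ell_{j+1}$ for $1\le j\le N-1$. For $p=2$ we have $N=2$ and the polynomial is $1+3x+x^2$, which can be checked directly. So assume $p\ge 3$, hence $N\ge 6$.

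We split by $j$. For $j\ge 3$ all three coefficients are binomial coefficients, and the inequality is the classical log-concavity of $\binom{N}{k}$. It follows from
\[
\frac{\binom{N}{j}^2}{\binom{N}{j-1}\binom{N}{j+1}}=\frac{(j+1)(N-j+1)}{j(N-j)}\ge 1 .
\]
For $j=2$ the left side is $\binom{N}{2}^2$, while the right side is $\ell_1\ell_3\ge\binom{N}{1}\binom{N}{3}$, since $\ell_1>N$. So this case is not automatic, and it needs an explicit comparison. For $j=1$ we need $(p^2-1)^2\ge \binom{N}{2}=N(N-1)/2$. This holds because $(p^2-1)^2\ge N^2>N(N-1)/2$.

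The main obstacle is $j=2$: we must show $\binom{N}{2}^2\ge (N+p-1)\binom{N}{3}$. Dividing by $N(N-1)/2$, this becomes
\[
\frac{N(N-1)}{2}\ge \frac{(N+p-1)(N-2)}{3},
\]
that is, $3N(N-1)\ge 2(N+p-1)(N-2)$. The difference of the two sides is
\[
N^2-(2p-3)N+4(p-1),
\]
which is positive when $N=p^2-p$ and $p\ge 3$. Indeed $N\ge 2p$ for $p\ge3$, so $N^2-(2p-3)N\ge 3N>0$. I would verify this small polynomial inequality carefully. Together with the other cases, it gives log-concavity, and unimodality then follows from positivity of the coefficients.
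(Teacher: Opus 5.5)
Your proof is correct and follows essentially the same route as the paper. Both use the coefficients $\ell_0=1$, $\ell_1=p^2-1$, $\ell_k=\binom{N}{k}$, split into the cases $j=1$, $j=2$, $j\ge 3$, reduce $j=2$ to $3N(N-1)\ge 2(p^2-1)(N-2)$, and get unimodality from positivity plus log-concavity. The only cosmetic difference is that the paper simplifies the $j=2$ inequality to $(p-1)^3+5\ge 0$, while you use $N\ge 2p$; both are valid.
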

\begin{proof}
	Let $P(x) = I(\varGamma(\mathbb{Z}_{p^3}), x)$, and with $n = p^2-p$, we have
	$$P(x) = (1+x)^n + (p-1)x = \sum_{k=0}^n \binom{n}{k}x^k + (p-1)x.$$
	Let the coefficient sequence of $P(x)$ be $(\ell_k)_{k=0}^n$. Then we have
	$\ell_0 = \binom{n}{0} = 1, \ell_1 = \binom{n}{1} + p-1 = (p^2-p) + p-1 = p^2-1$, and  $\ell_k = \binom{n}{k}$ for $k \in \{2, 3, \dots, n\}.$
	Clearly, the coefficients of $P(x)$ are positive for $p \ge 2$. For $k=1$, we must have $\ell_1^2 \ge \ell_0 \ell_2$. Which translates to $(p^2-1)^2 \ge 1 \cdot \binom{p^2-p}{2}$, and gives $(p-1)^2(p+1)^2 \ge \tfrac{p(p-1)(p^2-p-1)}{2}$. It simplifies to  $2(p-1)(p+1)^2 \ge p(p^2-p-1)$, which is true for $p\geq 2.$ For $k=2$,  $\ell_2^2 \ge \ell_1 \ell_3$ implies that  $\binom{p^2-p}{2}^2 \ge (p^2-1)\binom{p^2-p}{3}$.
	For $p=2$,  the inequality $\ell_2^2 \ge 0$ is trivially true. For $p \ge 3$, with $n=p^2-p \ge 6$, the inequality is
	$\left(\tfrac{n(n-1)}{2}\right)^2 \ge (p^2-1)\tfrac{n(n-1)(n-2)}{6}$. And by dividing $\tfrac{n(n-1)}{2}$, it yields $\tfrac{n(n-1)}{2} \ge (p^2-1)\tfrac{n-2}{3}$.
	$3n(n-1) \ge 2(p^2-1)(n-2)$, which after simplification gives $(p-1)^3+5 \ge 0$. Thus, the inequality holds. For  $k \ge 3$, and consider  $\ell_{j} = \binom{n}{j}$ for $j \in \{k-1, k, k+1\}$. So, we must show $\binom{n}{k}^2 \ge \binom{n}{k-1}\binom{n}{k+1}$, which is a standard property of binomial coefficients. It is equivalent to the ratio test $\tfrac{\binom{n}{k}}{\binom{n}{k-1}} \ge \tfrac{\binom{n}{k+1}}{\binom{n}{k}}$, which is $\tfrac{n-k+1}{k} \ge \tfrac{n-k}{k+1}$. This simplifies to $(n-k+1)(k+1) \ge k(n-k)$, which reduces to $n+1 \ge 0$. That is true as $n=p^2-p \ge 2$. Thus, the coefficient sequence $(\ell_k)_{k=0}^n$ of $P(x)$ is log-concave, and a log-concave positive real sequence is  unimodal. 
\end{proof}

Next, we discuss the independent  polynomial of the zero divisor graph of $\mathbb{Z}_{pq}$ with primes $p<q$.

For $n=pq$ with primes $p<q$, and let $R = \mathbb{Z}_{pq}$. Then the vertices of the graph $\varGamma(R)$ are the non-zero zero divisors of $R$. An element $a \in \{1, \dots, pq-1\}$ is a zero divisor if and only if $\gcd(a, pq) \neq 1$, which is equivalent to $p|a$ or $q|a$. The vertex set of $\varGamma(\mathbb{Z}_{pq})$ is partitioned into two sets: $V_p = \{kp \mid 1 \le k \le q-1\}$ and $V_q = \{lq \mid 1 \le l \le p-1\}$. The zero divisor $\varGamma(\mathbb{Z}_{pq})$ is the complete bipartite graph $K_{q-1, p-1}$ with parts $V_p$ and $V_q$.  Let $x,y \in V_p$ be distinct vertices. Then $x=ap$ and $y=bp,$ for $1 \le a, b \le q-1$. Their product $xy = abp^2$ is zero in $\mathbb{Z}_{pq}$ if and only if $pq|abp^2$, which implies $q|abp$. As $\gcd(p,q)=1$, this requires $q|ab$. Since $q$ is prime, $q|a$ or $q|b$, which is impossible as $1 \le a,b \le q-1$. Thus, no two vertices in $V_p$ are adjacent. By a symmetric argument, no two vertices in $V_q$ are adjacent.  Let $x \in V_p$ and $y \in V_q$. Then $x=ap$ and $y=bq$ for some integers $a,b$. Their product is $xy = (ap)(bq) = ab(pq) \equiv 0 \pmod{pq}$. Thus, every vertex in $V_p$ is adjacent to every vertex in $V_q$.
So, it follows that $\varGamma(\mathbb{Z}_{pq})$ is a complete bipartite graph with partitions $V_p$ and $V_q$. 

Applying \eqref{eq join}, and noting that  $\varGamma(\mathbb{Z}_{pq}) \cong K_{q-1, p-1}\cong \overline{K}_{p-1}\vee \overline{K}_{q-1}$, we have the independence polynomial of $ \varGamma(\mathbb{Z}_{pq})$ as given below
\begin{equation}\label{ind poly zpq}
	I(\varGamma(\mathbb{Z}_{pq}), x) = (1+x)^{q-1} + (1+x)^{p-1} - 1.
\end{equation}

The following result gives the details about the real zeros of $I(\varGamma(\mathbb{Z}_{pq}), x).$
\begin{theorem}\rm\label{zeros pq}
	Let $x$ be a zero of the polynomial $I(\varGamma(\mathbb{Z}_{pq}), x)$. The properties of the zeros depend on the primes $p < q$.
	\begin{enumerate}
		\item If $p, q$ are odd primes. Then there are exactly two real zeros, one in the interval $(-1, 0)$ and another in $(-2, -1)$.
		\item If $p=2$ and $q$ is an odd prime, there are exactly two real zeros, one in the interval $(-1, 0)$ and another in $(-3, -2)$.
	\end{enumerate}
\end{theorem}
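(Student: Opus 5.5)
The plan is to shift the variable and reduce everything to a single convex polynomial. By \eqref{ind poly zpq}, put $y=1+x$, $a=q-1$ and $b=p-1$, so that
$$I(\varGamma(\mathbb{Z}_{pq}),x)=g(y),\qquad g(y)=y^{a}+y^{b}-1,\qquad a>b\ge 1 .$$
Real zeros $x$ correspond bijectively to real zeros $y=1+x$. So it suffices to show that $g$ has exactly two real zeros: one in $(0,1)$, and one in $(-1,0)$ in case 1 or in $(-2,-1)$ in case 2. Subtracting $1$ then gives the stated intervals $(-1,0)$, $(-2,-1)$ and $(-3,-2)$.

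\emph{Step 1 (at most two real zeros).} In both cases $a=q-1$ is even and $a\ge 2$. In case 1, $b=p-1$ is also even and $b\ge 2$. In case 2, $b=1$. In either case every term of
$$g''(y)=a(a-1)y^{a-2}+b(b-1)y^{b-2}$$
is nonnegative, because the exponents are even (or the term vanishes when $b=1$). Moreover $g''$ vanishes at most at the single point $y=0$. Hence $g$ is strictly convex on $\mathbb{R}$ and has at most two real zeros. This convexity step is the part that needs the most care: one must check that the parities of $a$ and $b$ make all terms of $g''$ nonnegative, including the boundary case $b=1$ and the smallest case $a=2$, that is, $q=3$.

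\emph{Step 2 (the positive zero).} We have $g(0)=-1<0$ and $g(1)=1>0$, and $g$ is strictly increasing on $(0,\infty)$. So $g$ has exactly one zero $y_0\in(0,1)$. This gives the zero $x=y_0-1\in(-1,0)$ in both cases.

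\emph{Step 3 (the negative zero).} In case 1 both exponents are even, so $g$ is an even function. Therefore $-y_0\in(-1,0)$ is also a zero, which gives $x=-1-y_0\in(-2,-1)$. In case 2 we have $g(y)=y^{a}+y-1$ with $a$ even. Evaluating gives $g(-1)=1-1-1=-1<0$ and $g(-2)=2^{a}-3>0$ since $a\ge 2$. By the intermediate value theorem there is a zero in $(-2,-1)$, which gives $x\in(-3,-2)$. Combined with Step 1, these two zeros are the only real zeros. This completes the proof of both parts.
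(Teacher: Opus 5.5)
Your proof is correct and follows essentially the same route as the paper: substitute $y=1+x$, get the root in $(0,1)$ from monotonicity on $(0,\infty)$ and the intermediate value theorem, and get the negative root from evenness of $y^{q-1}+y^{p-1}-1$ in case 1 and from the sign change $g(-1)<0<g(-2)$ in case 2. The only difference is how you bound the number of real zeros by two: you do it uniformly by strict convexity ($g''\ge 0$ by parity), whereas the paper uses the substitution $z=y^{2}$, reducing to the increasing function $z^{m}+z^{k}-1$, in case 1, and the unique critical point of $f(y)=y^{q-1}+y-1$ in case 2 (the same convexity fact, since $f'(y)=(q-1)y^{q-2}+1$ is increasing).
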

\begin{proof}
	Let $P(x) = (1+x)^{q-1} + (1+x)^{p-1} - 1$ be an independence polynomial of $\varGamma(\mathbb{Z}_{pq})$.  With $y=1+x$, the problem reduces to finding the roots of the equation $y^{q-1} + y^{p-1} = 1$. We analyze the roots based on the parity of the primes $p<q$. If $p $ and $q$ are odd primes, then $p-1=2k$ and $q-1=2m$ for some integers $k,m \ge 1$. Thus, the equation becomes $y^{2m} + y^{2k} = 1$. For real $y$, let $z=y^2 \ge 0$. Then the equation becomes $z^m + z^k = 1$. If $f(z) = z^m + z^k - 1$, then for $z>0$, we get $f'(z) = mz^{m-1} + kz^{k-1} > 0$. So, the function $f(z)$ is strictly increasing. Clearly, $f(0)=-1$ and $f(1)=1$, so by the Intermediate Value Theorem, there is a unique positive real root $z_0 \in (0,1)$. This gives two real roots for $y$, namely $y = \pm\sqrt{z_0}$. Since $0 < z_0 < 1$, we have $0 < \sqrt{z_0} < 1$. The corresponding zeros of $P(x)$ are $x_1 = \sqrt{z_0}-1 \in (-1,0)$ and $x_2 = -\sqrt{z_0}-1 \in (-2,-1)$. That proves (1). 
	
	If $p=2$, $q$ is an odd prime, then the equation for $y=1+x$ becomes $y^{q-1} + y = 1$.
	With $f(y) = y^{q-1}+y-1$, we have $f'(y) = (q-1)y^{q-2}+1$. As $q$ is an odd prime, $q-2$ is odd, and  $f'(y)=0$ has a unique real solution $y_c = -(1/(q-1))^{1/(q-2)} \in (-1,0)$, which is a local minimum. Also,
	$f(0)=-1$ and $f(1)=1$, so by Intermediate Value Theorem, there is a root $y_1 \in (0,1)$.
	As $y \to \pm\infty$, then $f(y) \to +\infty$. The minimum value is $f(y_c) = y_c(1 - 1/(q-1)) - 1 < -1$. Thus, there must be another real root $y_2 < y_c < 0$. Since $f(-1)=-1$ and $f(-2) = (-2)^{q-1}-3 = 2^{q-1}-3 \ge 1$ for $q \ge 3$, this second root lies in $(-2, -1)$.
	The corresponding zeros of $P(x)$ are $x_1=y_1-1 \in (-1,0)$ and $x_2=y_2-1 \in (-3,-2)$. That proves part (2). 
\end{proof}

For the remaining zeros of $I(\varGamma(\mathbb{Z}_{pq}), x),$ we have the following result.
\begin{theorem}\label{zeros pq complex}
	Let $x$ be a zero of the polynomial $P(x)=I(\varGamma(\mathbb{Z}_{pq}), x)$. With $p < q$, the zeros $x$ of $P(x)$ lie in the open annular region $r_1 < |x| < r_2$, where
	$$r_1 = \frac{R-1}{R^{q-1} + R^{p-1} - 2} \quad \text{and} \quad r_2 = R+1,\quad \text{with}\quad R = 2^{1/(q-p)}.$$
\end{theorem}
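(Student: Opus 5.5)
The plan is to handle the two bounds separately. For the outer bound we work in the shifted variable $y=1+x$, as in the proof of Theorem \ref{zeros pq}. For the inner bound we apply a Schwarz-lemma (Landau-type) estimate to $P(x)-1$, which vanishes at $x=0$. We first note that $q>p$ gives $R=2^{1/(q-p)}>1$, so $\rho:=R-1>0$, and that $R^{q-p}=2$.

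\emph{Outer bound.} Suppose $|x|\ge R+1$. Then $|y|=|1+x|\ge |x|-1\ge R$, so $|y|^{q-p}\ge R^{q-p}=2$. Since $P(x)=y^{q-1}+y^{p-1}-1$ by \eqref{ind poly zpq}, we get
$$|P(x)|\ \ge\ |y|^{p-1}\bigl(|y|^{q-p}-1\bigr)-1\ \ge\ |y|^{p-1}-1\ \ge\ R^{p-1}-1>0,$$
because $p\ge 2$ and $R>1$. Hence every zero satisfies $|x|<R+1=r_2$. This step is routine.

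\emph{Inner bound.} Set $g(x)=P(x)-1=(1+x)^{q-1}+(1+x)^{p-1}-2$, so $g(0)=0$. On $|x|\le\rho$, the triangle inequality applied to the binomial expansion, whose coefficients are nonnegative with constant terms cancelling, gives
$$|g(x)|\le (1+\rho)^{q-1}+(1+\rho)^{p-1}-2=R^{q-1}+R^{p-1}-2=:M.$$
Schwarz's lemma applied to $g(\rho z)/M$ on the unit disc then gives $|g(x)|\le M|x|/\rho$ for $|x|\le\rho$. If $P(x)=0$, then $|g(x)|=1$, so $|x|\ge \rho/M=r_1$.

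\emph{Strictness, the main obstacle.} To exclude $|x|=r_1$, we use the equality case of Schwarz's lemma. Equality $|g(x)|=M|x|/\rho$ at some $x\neq0$ forces $g$ to be linear, but $g$ has degree $q-1\ge 2$. We also must check $r_1\le\rho$ so that the estimate applies on the relevant disc. This holds since $M\ge 1$, which follows from $R^{q-1}=2R^{p-1}\ge 2$ together with $R^{p-1}\ge 1$. Combining both parts, all zeros lie in the open annulus $r_1<|x|<r_2$.
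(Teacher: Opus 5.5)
Your proof works, with one small repair noted below. The outer bound is the paper's argument in contrapositive form. The inner bound, however, follows a different route.

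\textbf{How the routes compare.} The paper first proves $|1+x|<R$ for every zero. It then writes $P(x)-1=x\,S(1+x)$ with $S(w)=\sum_{k=0}^{q-2}w^k+\sum_{k=0}^{p-2}w^k$, so a zero satisfies $x\,S(1+x)=-1$. Finally it bounds $|S(w)|<S(R)=M/(R-1)$ on $|w|<R$ by the triangle inequality, and strictness comes automatically from $|w|<R$. Your $g(x)$ equals $x\,S(1+x)$, so you bound the same quotient $|g(x)/x|$ by the same constant $M/\rho$. You do it on the smaller disc $|x|\le\rho$ (where $|1+x|\le R$) via Schwarz's lemma, and you dispose of zeros with $|x|>\rho$ trivially.

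What your route buys: the inner bound no longer depends on the outer one, and the radius becomes a free parameter. Writing $M(\rho)=(1+\rho)^{q-1}+(1+\rho)^{p-1}-2$, the choice $\rho=R-1$ only reproduces the stated $r_1$. Choosing $\rho$ with $M(\rho)=1$ would give a strictly larger lower bound, because $\rho/M(\rho)$ is decreasing. The price is a case split and a more delicate strictness step; the paper's route is sequential but purely elementary.

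\textbf{The repair in the strictness step.} The equality clause of Schwarz's lemma applies only at points with $0<|x|<\rho$. On the boundary, equality genuinely occurs: $g(\rho)=M$, yet $g$ is not linear. So you need $r_1<\rho$, i.e.\ $M>1$, not merely $M\ge 1$. This is immediate: $M=3R^{p-1}-2$, and $R^{p-1}\ge R>1$ since $p\ge 2$.

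You could also avoid Schwarz altogether. Put $c_k=\binom{q-1}{k}+\binom{p-1}{k}$. For $0<|x|<\rho$ you have $|g(x)|\le |x|\sum_{k\ge1}c_k|x|^{k-1}<|x|M/\rho$, because $c_2>0$.
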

\begin{proof}
	By Equation \eqref{ind poly zpq}, the independence polynomial of $ \varGamma(\mathbb{Z}_{pq})$ is 
	$$P(x)=I(\varGamma(\mathbb{Z}_{pq}), x) = (1+x)^{q-1} + (1+x)^{p-1} - 1.$$
	Let $z$ be a zero of the polynomial $P(x)$, such that
	$$P(z) = (1+z)^{q-1} + (1+z)^{p-1} - 1 = 0.$$
	Let $w = 1+z$. Then $z=w-1$, and $w$ must be a zero of the new polynomial $Q(w) = w^{q-1} + w^{p-1} - 1$. The degree of $P(x)$ is $q-1$, so there are $q-1$ such zeros. Note that $P(0)=1$, so $z=0$ is not a zero, and consequently $w=1$ is not a zero of $Q(w)$. We first establish bounds for the modulus of the zeros of $Q(w)$. We claim that  any zero $w$ of $Q(w)=w^{q-1} + w^{p-1} - 1$ satisfies $|w| < 2^{1/(q-p)}$. 	The equation $Q(w)=0$ can be written as $w^{q-1} = 1 - w^{p-1}$. If $|w| \le 1$, the claim holds, since $p, q$ are primes with $p<q$, so $q-p \ge 1$, which implies $2^{1/(q-p)} > 1$. Now, if $|w|>1$, then with the help of the triangle inequality, we have
	$$|w|^{q-1} = |1 - w^{p-1}| \le 1 + |w|^{p-1}.$$
	If $x = |w|$, then with $x>1$ and $p-1>0$, we can divide by $x^{p-1}$, and obtain
	$$x^{q-p} \le \frac{1}{x^{p-1}} + 1 < 1+1 = 2.$$
	The above inequality is strict as $x>1$. Thus, $|w|^{q-p} < 2$, which implies that $|w| < 2^{1/(q-p)}$. So, with $R = 2^{1/(q-p)}$, it shows that all zeros $w$ of $Q(w)$ lie inside the circle $|w|=R$. Next, we find the annular region for the zeros $z=w-1$. The modulus of a zero $z$ is given by $|z|=|w-1|$. By the triangle inequality and by $|w|^{q-p} < 2$, we have
	$$|z| = |w-1| \le |w|+1 < R+1 = 2^{1/(q-p)}+1.$$
	Next, as $w$ is a zero of $Q(w)$, so $Q(w)=0$. Also $w \ne 1$, and $Q(1) = 1^{q-1}+1^{p-1}-1 = 1$. Now consider the difference $Q(w)-Q(1)=(w^{q-1}+w^{p-1}-1) - 1 = -1.$
	Also, we have
	$$Q(w)-Q(1) = (w^{q-1}-1) + (w^{p-1}-1)$$
	Comparing and factoring the differences of powers, we have
	$$(w-1)\left(\sum_{k=0}^{q-2} w^k\right) + (w-1)\left(\sum_{k=0}^{p-2} w^k\right) = -1.$$
	With $z=w-1$, and $S(w) = \sum_{k=0}^{q-2} w^k + \sum_{k=0}^{p-2} w^k$, we get $z \cdot S(w) = -1$.
	Since $z \ne 0$, so $S(w) \ne 0$, and we can write $|z| = \tfrac{1}{|S(w)|}$.
	To find a lower bound for $|z|$, we need an upper bound for $|S(w)|$. So, by triangle inequality, we have
	$$|S(w)| \le \left|\sum_{k=0}^{q-2} w^k\right| + \left|\sum_{k=0}^{p-2} w^k\right| \le \sum_{k=0}^{q-2} |w|^k + \sum_{k=0}^{p-2} |w|^k.$$
	Since $|w|<R$, the above geometric series sums are bounded as
	$$|S(w)| < \sum_{k=0}^{q-2} R^k + \sum_{k=0}^{p-2} R^k = \frac{R^{q-1}-1}{R-1} + \frac{R^{p-1}-1}{R-1} = \frac{R^{q-1}+R^{p-1}-2}{R-1}.$$
	Therefore, we obtain
	$$|z| = \frac{1}{|S(w)|} > \frac{R-1}{R^{q-1}+R^{p-1}-2}$$
	Now, combining these results, the zeros $z$ of $P(x)$ are located in the annular region
	$$ \frac{R-1}{R^{q-1}+R^{p-1}-2} < |z| < R+1, $$
	where $R=2^{1/(q-p)}$.
\end{proof}

We will illustrate Theorem \ref{zeros pq} by the examples. For $p=7$ and $q=17,$ we have
\begin{align*}
	I(\varGamma(\mathbb{Z}_{119}), x)=&1 + 22 x + 135 x^2 + 580 x^3 + 1835 x^4 + 4374 x^5 + 8009 x^6 + 
	11440 x^7 + 12870 x^8 \\
	&+ 11440 x^9 + 8008 x^{10} + 4368 x^{11} + 
	1820 x^{12} + 560 x^{13} + 120 x^{14} + 16 x^{15} + x^{16}.
\end{align*}
The zeros of the polynomial $I(\varGamma(\mathbb{Z}_{119}), x)$ are shown in  Figure \ref{zeros independent pq}(a). Also, $R = 2^{(1/(q - p))}=1.12246,$ and $\tfrac{R-1}{R^{p-1}+R^{q-1}-2}=0.0306$. Thus, by Theorem \ref{zeros pq complex}, the annular region is $ 0.0306 < |z| < 2.12246$ and the zeros of $I(\varGamma(\mathbb{Z}_{119}), x)$ lie inside it, as can be seen in Figure \ref{zeros independent pq}(b).

For $p=2,$ and $q=13,$ we have 
\begin{align*}
	I(\varGamma(\mathbb{Z}_{26}), x)=&1 + 13 x + 66 x^2 + 220 x^3 + 495 x^4 + 792 x^5 + 924 x^6 + 792 x^7 + 
	495 x^8 + 220 x^9\\
	& + 66 x^{10} + 12 x^{11} + x^{12},
\end{align*}
and its zeros are shown in Figure \ref{zeros independent pq}(c).

\begin{figure}[H]
	\centering{\scalebox{.23}{\includegraphics{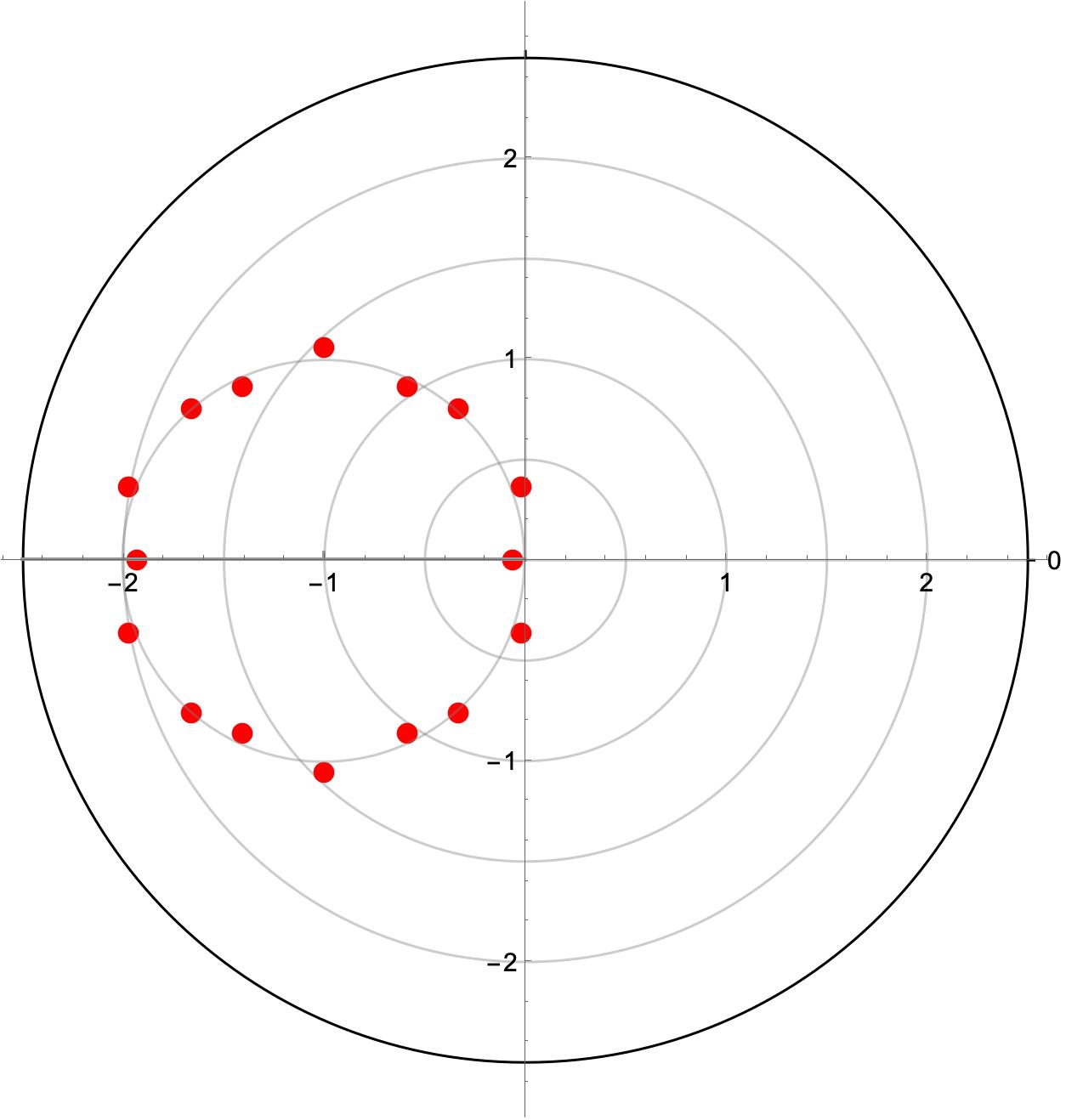}} \scalebox{.28}{\includegraphics{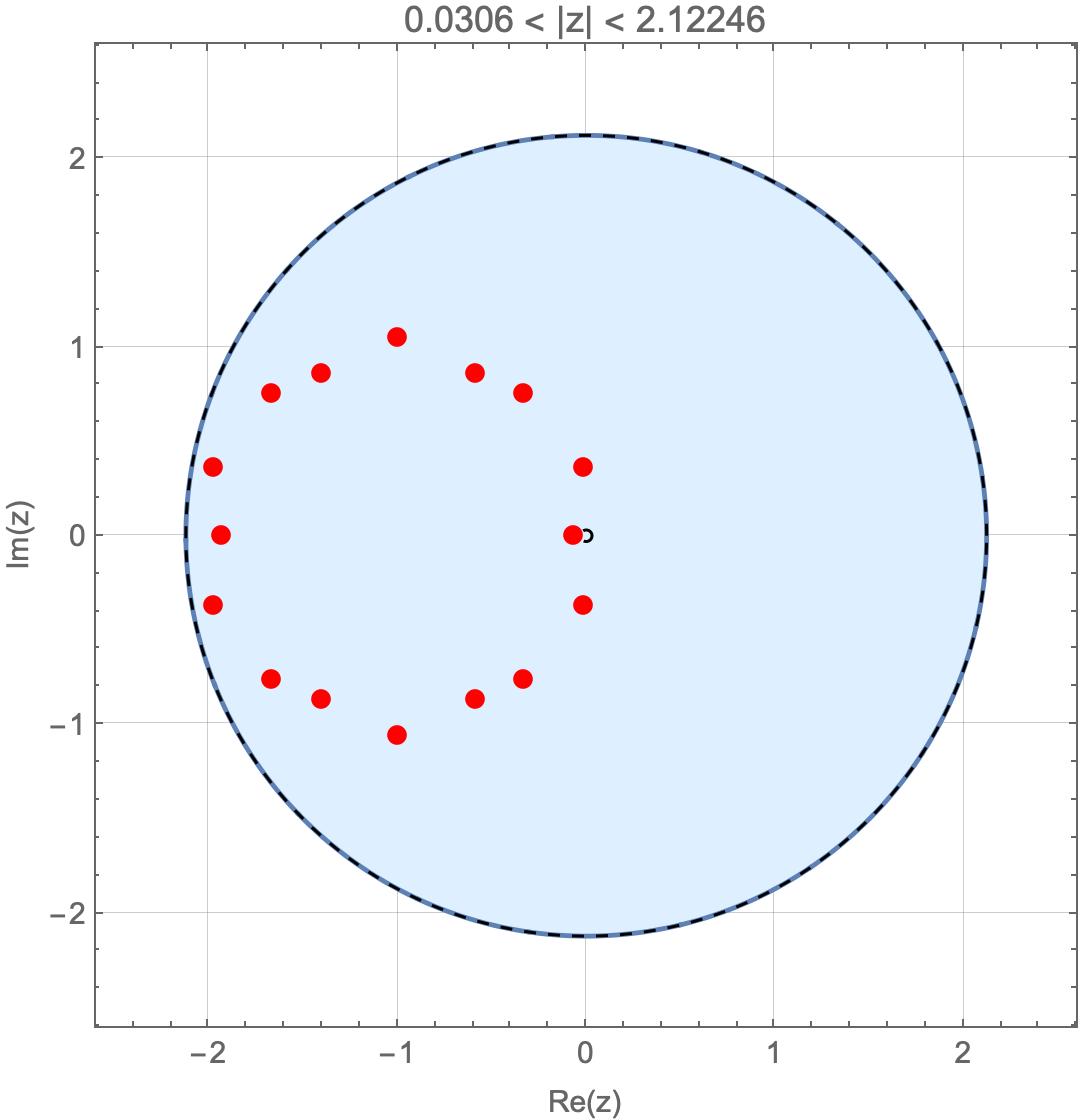}}\quad\scalebox{.23}{\includegraphics{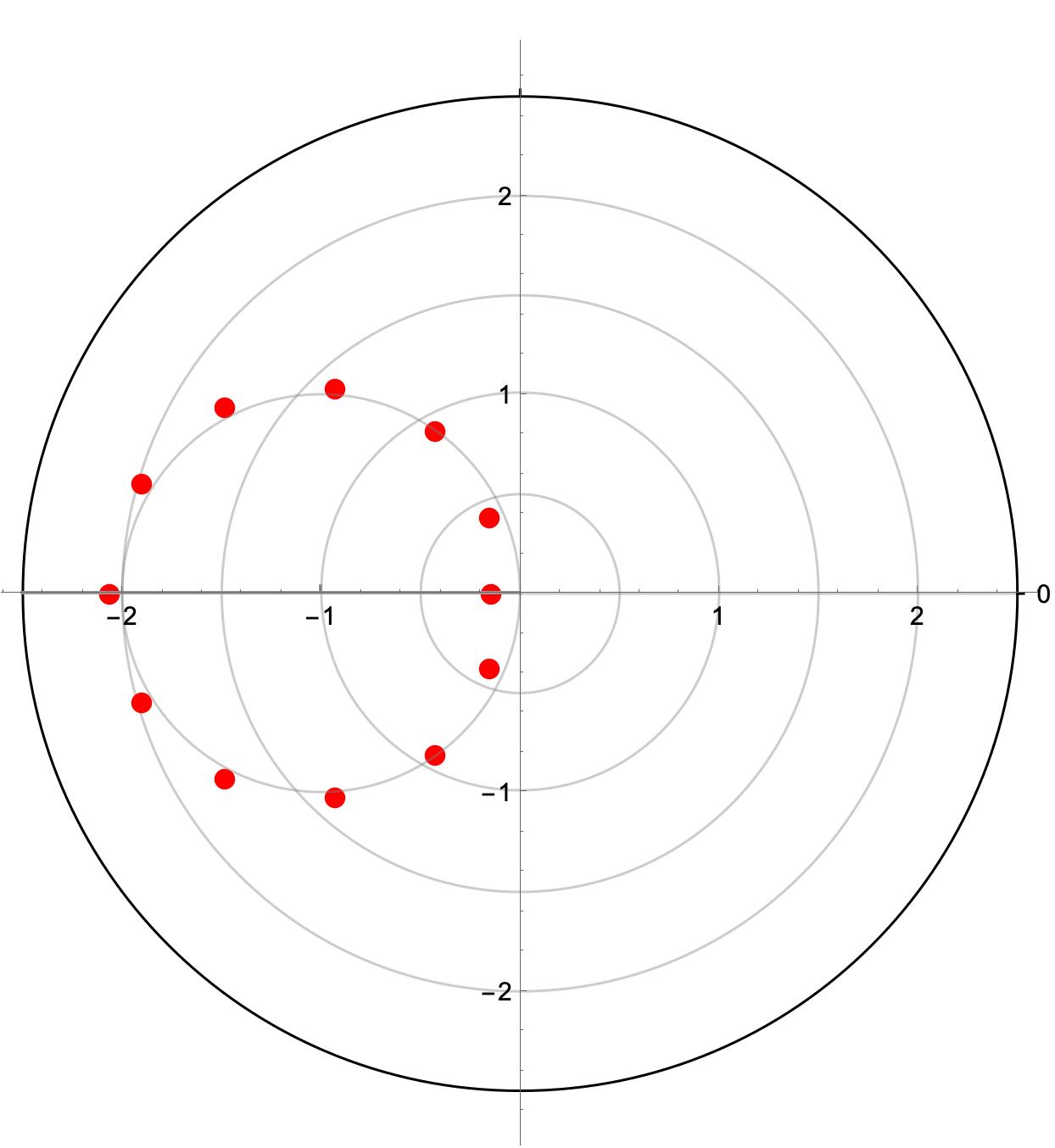}}}\\
	(a) Zeros of $I(\varGamma(\mathbb{Z}_{119}), x)$\quad  (b)  $0.0306 < |z| < 2.12246$\qquad (c) Zeros of $I(\varGamma(\mathbb{Z}_{26}), x).$
	\caption{Pictorial representation of the zeros of  $I(\varGamma(\mathbb{Z}_{119}), x)$, and $I(\varGamma(\mathbb{Z}_{26}), x)$ in a plane.}
	\label{zeros independent pq}
\end{figure}

Next result show that the independence polynomial $I(\varGamma(\mathbb{Z}_{pq}), x) $ of graph $ \varGamma(\mathbb{Z}_{pq})$
 is unimodal and log-concave. 
 \begin{theorem}
 	The polynomial $I(\varGamma(\mathbb{Z}_{pq}),x)$ is log-concave and unimodal.
 \end{theorem}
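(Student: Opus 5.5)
We work directly with the explicit form \eqref{ind poly zpq}. Write $a=p-1$ and $b=q-1$, so that $a<b$, and
$$I(\varGamma(\mathbb{Z}_{pq}),x)=(1+x)^{b}+(1+x)^{a}-1=\sum_{k=0}^{b}\ell_k x^k,\qquad \ell_0=1,\quad \ell_k=\binom{b}{k}+\binom{a}{k}\ (k\ge 1),$$
with the convention $\binom{a}{k}=0$ for $k>a$. All coefficients are positive, so there are no internal zeros. Hence it suffices to prove log-concavity, $\ell_k^2\ge \ell_{k-1}\ell_{k+1}$ for $1\le k\le b-1$; unimodality then follows from the fact quoted from \cite{stanley} in the introduction.

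The case $k=1$ is handled by hand. We need
$$(a+b)^2\ \ge\ \binom{b}{2}+\binom{a}{2}=\frac{a^2+b^2-a-b}{2},$$
which clearly holds. For $k\ge 2$ we have $\ell_{k-1}=\binom{b}{k-1}+\binom{a}{k-1}$, so every index involved is of the form $B_j+A_j$ with $B_j=\binom{b}{j}$ and $A_j=\binom{a}{j}$. Both sequences $(B_j)$ and $(A_j)$ are log-concave, by the ratio computation already carried out in the proof for $\mathbb{Z}_{p^3}$.

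The key step is that a termwise sum of two log-concave sequences need not be log-concave in general. Here, however, we can exploit the additional structure of the binomial coefficients. Expanding, we get
$$\ell_k^2-\ell_{k-1}\ell_{k+1}=\bigl(B_k^2-B_{k-1}B_{k+1}\bigr)+\bigl(A_k^2-A_{k-1}A_{k+1}\bigr)+\bigl(2A_kB_k-A_{k-1}B_{k+1}-A_{k+1}B_{k-1}\bigr).$$
The first two brackets are nonnegative, so the work lies in the cross term. For $k\ge a+1$ we have $A_k=A_{k+1}=0$. If $k\ge a+2$, then all $A$ terms vanish and we are done. At the boundary $k=a+1$, the cross term is $-A_aB_{a+2}=-B_{a+2}$, and it must be absorbed by $B_{a+1}^2-B_aB_{a+2}$. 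To show this, write
$$B_{a+1}^2-B_aB_{a+2}=B_{a+1}^2\left(1-\frac{(a+1)(b-a-1)}{(a+2)(b-a)}\right)$$
and compare it with $B_{a+2}$ via the ratio $B_{a+2}/B_{a+1}=(b-a-1)/(a+2)$. This reduces to an elementary inequality, which holds since $B_{a+1}\ge b$.

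For $1\le k\le a$, all terms are positive. Set $r_j=B_j/B_{j-1}=(b-j+1)/j$ and $s_j=A_j/A_{j-1}=(a-j+1)/j$. Because $a<b$, we have $s_j<r_j$ and both are decreasing in $j$. The cross term is nonnegative if and only if
$$2\ \ge\ \frac{r_{k+1}}{s_k}+\frac{s_{k+1}}{r_k}.$$
This may fail, since $r_{k+1}/s_k$ can be large. The plan is therefore not to insist on termwise positivity, but to bound the possibly negative part $A_{k-1}B_{k+1}-A_kB_k$ by the surplus $B_k^2-B_{k-1}B_{k+1}=B_k^2\cdot\frac{b+1}{k(b-k+1)}\cdot\frac{1}{?}$, which has an explicit form. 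Concretely, it suffices to show
$$A_{k-1}B_{k+1}\le A_kB_k+\bigl(B_k^2-B_{k-1}B_{k+1}\bigr)+A_{k+1}\text{-terms},$$
and since $B_k\ge A_{k-1}\cdot\frac{B_k}{A_{k-1}}$ is much larger than $A_{k-1}$ when $b>a$, dividing through by $B_kB_{k-1}$ turns everything into rational functions of $k,a,b$. This is the main obstacle: if a clean algebraic proof resists, the first thing I would try is the alternative route of viewing $\ell_k$ as $\sum_j\binom{b-a}{j}\binom{a}{k-j}+\binom{a}{k}$ and using that convolution preserves log-concavity.
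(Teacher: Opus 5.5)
\textbf{There is a genuine gap: log-concavity is not established for $2\le k\le a$.} Your treatment of $k=1$, of $k\ge a+2$, and of the boundary $k=a+1$ is fine. The boundary case reduces to $B_{a+1}(b+1)\ge (b-a)(b-a-1)$, which follows from $B_{a+1}\ge b$. The problem is the central range $2\le k\le a$, i.e.\ $2\le k\le p-1$. You correctly observe that the cross term $2A_kB_k-A_{k-1}B_{k+1}-A_{k+1}B_{k-1}$ can be negative. However, the promised absorption into the surplus $B_k^2-B_{k-1}B_{k+1}$ is never carried out. The surplus formula still contains a placeholder; its correct value is $B_k^2\cdot\frac{b+1}{(k+1)(b-k+1)}$. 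The remark $B_k\ge A_{k-1}\cdot B_k/A_{k-1}$ is a tautology, not an estimate. So no inequality is actually proved in this range.

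The paper's own proof fails at exactly this spot. It claims it suffices to show the cross term is nonnegative, reduces that to $2(k+1)(b-k+1)(a-k+1)\ge k(a-k)(a-k+1)+k(b-k)(b-k+1)$, and then divides by $(b-k+1)(a-k+1)$ incorrectly. The right side actually becomes $k\frac{a-k}{b-k+1}+k\frac{b-k}{a-k+1}$, which is not bounded by $2k$. Your suspicion is justified. For $p=3$, $q=11$, $k=2$ one has $2A_2B_2=90<240=A_1B_3+A_3B_1$, equivalently $54<144$ in the reduced form. Yet the full inequality $\ell_2^2=2116\ge 1440=\ell_1\ell_3$ still holds. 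So termwise splitting cannot work, and neither your main route nor the paper's finishes this case.

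\textbf{How to close it.} The fallback in your last sentence is the right idea, and once the constant term is handled it settles all $k$ at once. Write $(1+x)^b+(1+x)^a-1=(1+x)^a\bigl((1+x)^{b-a}+1\bigr)-1$.
\begin{itemize}
\item The coefficients $2,\binom{b-a}{1},\dots,\binom{b-a}{b-a}$ of $(1+x)^{b-a}+1$ are positive and log-concave. The only non-binomial check is $(b-a)^2\ge 2\binom{b-a}{2}=(b-a)(b-a-1)$.
\item The coefficients of $(1+x)^a$ are also positive and log-concave.
\item By the convolution property of positive log-concave sequences (the standard fact the paper quotes from \cite{stanley} in its $\mathbb{Z}_{p^2q}$ section), the product has log-concave coefficient sequence $2,\ell_1,\ell_2,\dots,\ell_b$.
\item Lowering the constant term from $2$ to $\ell_0=1$ leaves every inequality with $k\ge 2$ unchanged. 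It only weakens the right-hand side of $\ell_1^2\ge \ell_0\ell_2$.
\end{itemize}
This gives log-concavity with no case split, so the boundary analysis at $k=a+1$ becomes unnecessary. Unimodality then follows from positivity exactly as you say.
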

 
 \begin{proof}
 	Since the independence polynomial of $Gamma(\mathbb{Z}_{pq})$ is 
 	$
 	P(x)=I(\varGamma(\mathbb{Z}_{pq}),x)=(1+x)^n+(1+x)^m-1,
 	$
 	where $n=q-1$ and $m=p-1$. As $p<q$ are primes, so we have $1\leq m<n$. Let
 	$
 	P(x)=\sum_{k=0}^{n}\ell_kx^k.
 	$
 	Then
 	$
 	\ell_0=1,
 	$
 	and, for $1\leq k\leq n$, we have 
 	$
 	\ell_k=\binom{n}{k}+\binom{m}{k},
 	$
 	where we use the convention $\binom{m}{k}=0$ whenever $k>m$. Thus, 
 	$
 	\ell_k=
 	\binom{n}{k}+\binom{m}{k}$ for $1\leq k\leq m,$
 	and
 	$\ell_k=\binom{n}{k}$ for $m+1\leq k\leq n.$
 	We must show that
 	$
 	\ell_k^2\geq \ell_{k-1}\ell_{k+1}$ for $1\leq k\leq n-1.$
 	For  $k=1$,  we have 
 	$
 	\ell_1^2-\ell_0\ell_2
 	=
 	\tfrac{n^2+m^2+4mn+n+m}{2}>0,
 	$ and hence the log-concavity inequality holds for $k=1$.
 	 Next, assume that $2\leq k\leq m$. If 
 	$
 	N_j=\binom{n}{j},$ and $ M_j=\binom{m}{j},
 	$
 	then
 	$
 	\ell_j=N_j+M_j
 	$
 	for $j=k-1,k$, while the same formula also holds for $j=k+1$, if we agree that $M_{m+1}=0$ when $k=m$. We will prove that
 	$$
 	(N_k+M_k)^2\geq (N_{k-1}+M_{k-1})(N_{k+1}+M_{k+1}), 
 	$$
 	or equivalently, by expanding, it is enough to prove that 
 	$
 	N_k^2\geq N_{k-1}N_{k+1}, 
 	M_k^2\geq M_{k-1}M_{k+1},
 	$
 	and
 	$
 	2N_kM_k\geq N_{k-1}M_{k+1}+M_{k-1}N_{k+1}.
 	$
 	The first two inequalities are the standard log-concavity inequalities for binomial coefficients. It remains to prove the mixed type  third inequality.  As
 	$
 	\binom{r}{k}
 	=
 	\frac{r-k+1}{k}\binom{r}{k-1}
 	$
 	and
 	$
 	\binom{r}{k+1}
 	=
 	\frac{r-k}{k+1}\binom{r}{k},
 	$
 	the mixed inequality becomes
 	$$
 	2\binom{n}{k}\binom{m}{k}
 	\geq
 	\binom{n}{k-1}\binom{m}{k+1}
 	+
 	\binom{m}{k-1}\binom{n}{k+1}.
 	$$
 	Thus, by substituting their values, we have
 	\begin{align*}
 		2\frac{n-k+1}{k}\binom{n}{k-1}
 		\frac{m-k+1}{k}\binom{m}{k-1}\geq&
 		\binom{n}{k-1}
 		\frac{m-k}{k+1}
 		\frac{m-k+1}{k}
 		\binom{m}{k-1} \\ & +
 		\binom{m}{k-1}
 		\frac{n-k}{k+1}
 		\frac{n-k+1}{k}
 		\binom{n}{k-1}.
 	\end{align*}
 	Since $2\leq k\leq m$, the factors involved are non-negative, and after simplifying, 
 	we obtain
 	$$
 	2(k+1)(n-k+1)(m-k+1)
 	\geq
 	k(m-k)(m-k+1)+k(n-k)(n-k+1).
 	$$
 	With compact notations
 	$
 	A=n-k+1,$ and $ B=m-k+1.
 	$, we have
 	$$
 	2(k+1)AB\geq k(B-1)B+k(A-1)A
 	$$
 	which is equivalent to 
 	$$
 	2(k+1)\geq k\left(1-\frac{1}{B}\right)+k\left(1-\frac{1}{A}\right).
 	$$
 	The right-hand side of above inequality is at most $2k$, while the left-hand side is $2k+2$. Hence the inequality holds, and $
 	\ell_k^2\geq \ell_{k-1}\ell_{k+1}
 	$
 	for every $2\leq k\leq m$. For the boundary index $k=m+1$, which occurs only when $m+1\leq n-1$, that is, when $n\geq m+2$. So,  $ \ell_{m+1}^{2}\geq \ell_{m}\ell_{m+2}$ gives  
 	$$
 	\binom{n}{m+1}^2
 	\geq
 	\left(\binom{n}{m}+1\right)\binom{n}{m+2}.
 	$$
 	If $
 	a=n-m, $ then $a\geq 2$ as  $n\geq m+2$. Thus, with 
 	$
 	\binom{n}{m+1}
 	=
 	\binom{n}{m}\frac{a}{m+1}
 	$
 	and
 	$
 	\binom{n}{m+2}
 	=
 	\binom{n}{m}\frac{a(a-1)}{(m+1)(m+2)},
 	$
 	the  inequality is equivalent to
 	$
 	\binom{n}{m}(n+1)\geq (a-1)(m+1).
 	$
 	Now $1\leq m\leq n-2$, so $\binom{n}{m}\geq n$, $
 	(a-1)+(m+1)=n,
 	$
 	and hence
 	$$
 	(a-1)(m+1)\leq \frac{n^2}{4}\leq n(n+1)\leq \binom{n}{m}(n+1).
 	$$
 	Therefore, the log-concavity inequality also holds at $k=m+1$. Finally, suppose $k\geq m+2$. Then
 	$
 	\ell_{k-1}=\binom{n}{k-1},
 	\ell_k=\binom{n}{k},$ and $
 	\ell_{k+1}=\binom{n}{k+1}.
 	$
 	Thus,
 	$
 	\ell_k^2\geq \ell_{k-1}\ell_{k+1}
 	$
 	follows directly from the log-concavity of the binomial coefficients in the expansion of $(1+x)^n$.
 	 Combining all cases, the coefficient sequence
 	$
 	(\ell_0,\ell_1,\ldots,\ell_n)
 	$
 	is log-concave. Since all coefficients are positive, the sequence has no internal zeros. A positive log-concave sequence is unimodal \cite{stanley}. Therefore, $P(x)=I(\varGamma(\mathbb{Z}_{pq}),x)$ is both log-concave and unimodal.
 \end{proof}

\section{Independence polynomial of zero divisor graph of $ \mathbb{Z}_{p^2q}$}\label{section 3}
The following result gives the independent  polynomial of the zero divisor graph $\varGamma(\mathbb{Z}_{p^2q})$.
\begin{theorem}\label{ind poly zp^2q}
	For primes $p<q$, the independence polynomial of $\varGamma(\mathbb{Z}_{p^2q})$ is
	$$I(\varGamma(\mathbb{Z}_{p^2q}); x) = (1+x)^{(p-1)(p+q-1)} + (1+x)^{p(q-1)} - (1+x)^{(p-1)(q-1)} + (p-1)x(1+x)^{p(p-1)}.$$
\end{theorem}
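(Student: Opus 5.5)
The plan is to determine the structure of $\varGamma(\mathbb{Z}_{p^2q})$ from the partition into the classes $V_{\rho}$ described in the introduction. After that, the independent sets are counted by splitting on whether they meet the unique clique class. The proper divisors of $n=p^2q$ are $p,\,q,\,p^2,\,pq$, with class sizes
$$|V_p|=\varphi(pq)=(p-1)(q-1),\quad |V_q|=\varphi(p^2)=p(p-1),\quad |V_{p^2}|=\varphi(q)=q-1,\quad |V_{pq}|=\varphi(p)=p-1.$$
Two classes $V_{\rho_i},V_{\rho_j}$ are completely joined exactly when $p^2q\mid\rho_i\rho_j$, and are otherwise non-adjacent. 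Similarly, a class induces a clique exactly when $p^2q\mid\rho_i^2$, and is otherwise edgeless.

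First I check all products. Among $p^2,pq,p^3,p^2q,p^3q,p^2q^2$ (pairs within $\{p,p^2,pq\}$) and $q\cdot p^2$, the multiples of $p^2q$ are $p\cdot pq$, $p^2\cdot q$, $p^2\cdot pq$ and $pq\cdot pq$. The products $p\cdot p$, $p\cdot q$, $p\cdot p^2$, $q\cdot q$, $p^2\cdot p^2$ and, importantly, $q\cdot pq=pq^2$ are not multiples of $p^2q$. Hence $V_{pq}$ is a clique $K_{p-1}$ joined to $V_p\cup V_{p^2}$ but not to $V_q$. Also $V_q$ and $V_{p^2}$ are completely joined to each other, and every other class is edgeless with no further edges.

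Next I count independent sets $S$ according to whether $S$ meets $V_{pq}$.
\begin{itemize}
\item If $S\cap V_{pq}\neq\emptyset$: since $V_{pq}$ is a clique, $S$ contains exactly one vertex of $V_{pq}$ ($p-1$ choices). The remaining vertices must avoid $V_p\cup V_{p^2}$ and may be any subset of the edgeless set $V_q$. This contributes $(p-1)x(1+x)^{p(p-1)}$.
\item If $S\cap V_{pq}=\emptyset$: $S$ is an independent set of the subgraph induced on $V_p\cup V_q\cup V_{p^2}$. This subgraph is $\overline{K}_{(p-1)(q-1)}\cup\big(\overline{K}_{p(p-1)}\vee\overline{K}_{q-1}\big)$. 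By the union product rule and \eqref{eq join}, this contributes
$$(1+x)^{(p-1)(q-1)}\Big[(1+x)^{p(p-1)}+(1+x)^{q-1}-1\Big].$$
\end{itemize}

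Finally, I expand the second contribution using $(p-1)(q-1)+p(p-1)=(p-1)(p+q-1)$ and $(p-1)(q-1)+(q-1)=p(q-1)$. Adding the first contribution then gives exactly the stated polynomial.

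The main obstacle is the adjacency bookkeeping, specifically noticing that $V_{pq}$ is not adjacent to $V_q$. If that pair were wrongly included, the graph would become a join $K_{p-1}\vee H$ and \eqref{eq join} would give $(p-1)x$ in place of $(p-1)x(1+x)^{p(p-1)}$. For that reason I would verify each of the ten class pairs explicitly, including a check of the divisibility $p^2q\mid\rho_i\rho_j$.
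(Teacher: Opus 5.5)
Your proposal is correct and follows essentially the same route as the paper. It uses the same four classes with the same sizes and adjacencies (in particular $V_{pq}$ not joined to $V_q$), the same split of independent sets by whether they meet the clique $V_{pq}$, and the same union/join evaluation, and your explicit ten-pair divisibility check is more careful than the paper's bare assertion of the adjacencies. Your preliminary list of products is garbled ($pq$ is not a product of two elements of $\{p,p^2,pq\}$, and $p^4=p^2\cdot p^2$ is missing), but the full enumeration that follows covers all ten pairs correctly.
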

\begin{proof}
	Let $G\cong \varGamma(\mathbb{Z}_{p^2q})$ be the zero divisor graph of ring $\mathbb{Z}_{p^2q}.$ The vertex set $V$ can be partitioned into four disjoint sets based on the greatest common divisor (gcd) with $n=p^2q$, that is,
	$V_d = \{k \in \{1, \dots, n-1\} \mid \gcd(k, n) = d\}$. The non-zero zero divisors correspond to $d \in \{p, p^2, q, pq\}$, and their cardinalities are:
	\begin{align*}
		|V_p| &= \varphi(n/p) = \varphi(pq) = (p-1)(q-1), |V_{p^2}| = \varphi(n/p^2) = \varphi(q) = q-1,\\
		|V_q| &= \varphi(n/q) = \varphi(p^2) = p(p-1), |V_{pq}| = \varphi(n/pq) = \varphi(p) = p-1.
	\end{align*}
	For each $d\in \{p,q,p^{2}\},$ the set $V_{d} $ is an independent set as $d^{2}$ is not multiple of $n$. Also, the $V_{pq}$ is a clique, since $(pq)^{2}$ is multiple of $n$. Furthermore, each vertex of $V_{pq}$ is adjacent to every vertex of $V_{p}$ and $V_{p^{2}}$, and each vertex of $V_{q}$ is adjacent to every vertex of $V_{p^{2}}.$  
	Let $\mathcal{I}$ be the set of all independent sets of $G$. We partition $\mathcal{I}$ into two disjoint collections:
	$$\mathcal{I}_1 = \{S \in \mathcal{I} \mid S \cap V_{pq} = \emptyset\}\quad \text{and}\quad \mathcal{I}_2 = \{S \in \mathcal{I} \mid S \cap V_{pq} \neq \emptyset\}.$$ 
	Based on the above decomposition, the independence polynomial of $G$ can be written as: 
	$$I(G, x) = \sum_{S \in \mathcal{I}_1} x^{|S|} + \sum_{S \in \mathcal{I}_2} x^{|S|}.$$
	First consider the set $\mathcal{I}_1$. In this case, the independent sets of $G$ are the independent sets of the subgraph $G' = \varGamma[V_p \cup V_{p^2} \cup V_q]$. As there are no edges between $V_p$ and $V_{p^2} \cup V_q$, so $G'$ is the disjoint union of $\varGamma[V_p]$ and $\varGamma[V_{p^2} \cup V_q]$. We recall that  the independence polynomial of a disjoint union of graphs is the product of their independence polynomials.
	So, the independence polynomial of  $\varGamma[V_p]\cong \overline{K}_{|V_{p}|}$ is $I(\varGamma[V_p]; x) = (1+x)^{|V_p|}$. By \eqref{eq join}, the independence polynomial of 
	$\varGamma[V_{p^2} \cup V_q]\cong K_{|V_{p^2}|, |V_q|}$ is $I(\varGamma[V_{p^2} \cup V_q]; x) = (1+x)^{|V_{p^2}|} + (1+x)^{|V_q|} - 1$.
	Thus, the total contribution from $\mathcal{I}_1$ is $$I(G', x) = (1+x)^{|V_p|}\left((1+x)^{|V_{p^2}|} + (1+x)^{|V_q|} - 1\right).$$
	Next, consider $S \in \mathcal{I}_2$. As $V_{pq}$ is a clique, and it follows that $S$ can contain at most one vertex from $V_{pq}$. So, $S \cap V_{pq} \neq \emptyset$, and $|S \cap V_{pq}|=1$. Assume that  $S \cap V_{pq} = \{v\}$ for some $v \in V_{pq}$, and by structure of $G,$ we note that
	$v$ is adjacent to every vertex in $V_p$ and $V_{p^2}$. Thus, we obtain $S \cap V_p = \emptyset$ and $S \cap V_{p^2} = \emptyset$. As there are no edges between $V_{pq}$ and $V_q$, so the set, $S \setminus \{v\}$ may be any subset of $V_q$.
	Thus, any set in $\mathcal{I}_2$ is of the form $\{v\} \cup S_q$ where $v \in V_{pq}$ and $S_q \subseteq V_q$. With this information, the generating function for such sets is:
	\begin{align*}
		\sum_{v \in V_{pq}} \sum_{S_q \subseteq V_q} x^{|\{v\} \cup S_q|} &= \sum_{v \in V_{pq}} \sum_{S_q \subseteq V_q} x^{1+|S_q|} = |V_{pq}| \sum_{k=0}^{|V_q|} \binom{|V_q|}{k} x^{1+k} \\ 
		&= |V_{pq}|x \sum_{k=0}^{|V_q|} \binom{|V_q|}{k} x^k = |V_{pq}|x(1+x)^{|V_q|}.
	\end{align*}
	Summing the contributions from $\mathcal{I}_{1}$ and $\mathcal{I}_{2}$, and substituting the values $|V_p| = (p-1)(q-1), |V_{p^2}| = q-1,|V_q| = p(p-1)$, and $|V_{pq}| = p-1$, we obtain the independence polynomial of $G$ as
	\begin{align*}
		I(G; x) =& (1+x)^{(p-1)(q-1)}\left((1+x)^{q-1} + (1+x)^{p(p-1)} - 1\right) + (p-1)x(1+x)^{p(p-1)}\\
		=& (1+x)^{(p-1)(q-1)+q-1} + (1+x)^{(p-1)(q-1)+p(p-1)} - (1+x)^{(p-1)(q-1)}\\
		& + (p-1)x(1+x)^{p(p-1)}\\
		=& (1+x)^{p(q-1)} + (1+x)^{(p-1)(p+q-1)} - (1+x)^{(p-1)(q-1)} + (p-1)x(1+x)^{p(p-1)}.
	\end{align*}
\end{proof}

For $p=3$ and $q=5$, the independence polynomial of $G\cong \varGamma(\mathbb{Z}_{45})$ is
$$I(G, x) = (1+x)^{14} + (1+x)^{12} - (1+x)^8 + 2x(1+x)^6.$$
We give detailed information for this example. The set of vertices $V(G)$ can be partitioned based on the greatest common divisor of an element with $45.$ Let $V_d = \{v \in V \mid \gcd(v, 45) = d\}$. The possible values for $d$ are the divisors of $45$ other than $1$ and $45,$ which are $3, 5, 9, 15.$ The sizes of the vertex partitions $V_{d}$ are as: $|V_3| = \varphi(45/3) = \varphi(15) = 8, |V_5| = \varphi(45/5) = \varphi(9) = 6, |V_9| = \varphi(45/9) = \varphi(5) = 4$, and  $|V_{15}| = \varphi(45/15) = \varphi(3) = 2$.
The order $G$ is $|V| = 8+6+4+2=20$.  Also, $V_{15}=\{15,30\}, V_{5}=\{5,10,20,25,35,40\},V_{9}=\{9,18,27,36\}$ and $V_{3}=\{3,6,12,21,24,33,39,42\}.$
The sets $V_3, V_5, V_9$ are independent sets, and the set $V_{15}$ induces a $K_2$. Every vertex in $V_{15}$ is adjacent to every vertex in $V_3 \cup V_9$, and every vertex in $V_{5}$ is adjacent to every vertex in $V_{9}.$ The graph $G$ is shown in Figure \ref{zero divisor graph z36} with green vertices in $V_{15}$, blue in $V_{9}$, and white in  $V_{5}, $ and $V_{3}$. 
\begin{figure}[H]
\centering
	\begin{tikzpicture}[
		v/.style={circle, draw, inner sep=2pt, minimum size=20pt}
		]
		\def\Sfive{5,10,20,25,35,40}      
		\def\Sthreetwo{9,18,27,36}        
		\def\Sfifteen{15,30}             
		\def\Sthreeone{3,6,12,21,24,33,39,42} 
		
		\foreach \name [count=\i] in \Sfive {
			\node[v] (\name) at (-6, 5.25 - 1.5*\i) {\name};
		}
		\foreach \name [count=\i] in \Sthreetwo {
			\node[v, fill=cyan!25] (\name) at (-2, 4.5 - 2*\i) {\name};
		}
		\foreach \name [count=\i] in \Sfifteen {
			\node[v, fill=green!25] (\name) at (2, 2.25 - 3*\i) {\name};
		}
		\foreach \name [count=\i] in \Sthreeone {
			\node[v] (\name) at (6, 5.2 - 1.2*\i) {\name};
		}
		
		\foreach \sfive in \Sfive {
			\foreach \sthreetwo in \Sthreetwo {
				\draw[gray] (\sfive) -- (\sthreetwo);
			}
		}
		\foreach \sthreetwo in \Sthreetwo {
			\foreach \sfifteen in \Sfifteen {
				\draw[blue] (\sthreetwo) -- (\sfifteen);
			}
		}
		\foreach \sthreeone in \Sthreeone {
			\foreach \sfifteen in \Sfifteen {
				\draw[red] (\sthreeone) -- (\sfifteen);
			}
		}
		\draw[thick, green!50!black] (15) -- (30);
		
	\end{tikzpicture}
	\caption{Zero divisor graph of $\mathbb{Z}_{45}.$}
	\label{zero divisor graph z36}
\end{figure}

We compute $I(G,x)$ by considering the composition of an arbitrary independent set $S \subseteq V$ with respect to the partition $V_{15}$. With $V_{15}=\{v_a, v_b\}$, and consider the case $S \cap V_{15} = \emptyset$. 
An independent set $S$ is a subset of $V_3 \cup V_5 \cup V_9$. The subgraph induced by these vertices, $G[V_3 \cup V_5 \cup V_9]$, consists of the disjoint union of an empty graph on $V_3$ (8 vertices) and the complete bipartite graph $K_{6,4}$ on $V_5 \cup V_9$. The independence polynomial for this case is the product of the polynomials for these two components:
$$I(\overline{K}_8, x) \cdot I(K_{6,4}, x) = (1+x)^8 \left((1+x)^6 + (1+x)^4 - 1\right).$$

Next, we consider $S \cap V_{15} \neq \emptyset$. As $v_a$ and $v_b$ are adjacent, an independent set $S$ can contain at most one of them.
If $v_a \in S$, then $S$ cannot contain any neighbor of $v_a$. The neighborhood of $v_a$ is $V_3 \cup V_9 \cup \{v_b\}$. Thus, $S$ must be of the form $\{v_a\} \cup S'$, where $S'$ is an independent set in the remaining vertices, which is $V_5$. Since $V_5$ is an independent set of size $6$, so $S'$ can be any subset of $V_5$. The generating function for such sets $S$ is $x \cdot \sum_{k=0}^6 \binom{6}{k}x^k = x(1+x)^6$.
Again, if $v_b \in S$, with the same idea, the neighborhood of $v_b$ is $ V_3 \cup V_9 \cup \{v_a\}$, and the contribution such subsets to the independence polynomial of $G$ is also $x(1+x)^6$. Summing, all these case, we get the independence polynomial of $G$ as given below
\begin{align*}
	I(G,x) =& (1+x)^8 \left((1+x)^6 + (1+x)^4 - 1\right)+ x(1+x)^6 + x(1+x)^6\\
	=& (1+x)^8 (1+x)^6 + (1+x)^8 (1+x)^4 - (1+x)^8 + 2x(1+x)^6\\
	=& (1+x)^{14} + (1+x)^{12} - (1+x)^8 + 2x(1+x)^6.
\end{align*}

Next, we show that independence polynomial of $\varGamma(\mathbb{Z}_{p^2q})$ is log-concave and unimodal.
\begin{theorem}
	For primes $p<q$, he independence polynomial
	$$
	I(\varGamma(\mathbb{Z}_{p^2q});x)
	=
	(1+x)^{(p-1)(p+q-1)}
	+
	(1+x)^{p(q-1)}
	-
	(1+x)^{(p-1)(q-1)}
	+
	(p-1)x(1+x)^{p(p-1)}
	$$
	is log-concave and  unimodal.
\end{theorem}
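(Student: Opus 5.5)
The plan is to work from the structural decomposition in the proof of Theorem \ref{ind poly zp^2q} rather than from the expanded four-term formula. Write $a=|V_p|=(p-1)(q-1)$, $b=|V_{p^2}|=q-1$, $c=|V_q|=p(p-1)$. Then
$$I(\varGamma(\mathbb{Z}_{p^2q});x)=(1+x)^{a}\bigl((1+x)^{b}+(1+x)^{c}-1\bigr)+(p-1)x(1+x)^{c}.$$
Since $q>p$ gives $q-1\ge p$, we have $a\ge c$. I will therefore factor out $(1+x)^{c}$:
$$I(\varGamma(\mathbb{Z}_{p^2q});x)=(1+x)^{c}\,Q(x),$$
where
$$Q(x)=(1+x)^{a+b-c}+(1+x)^{a}-(1+x)^{a-c}+(p-1)x.$$
The first key step is the standard fact that the product of two positive log-concave sequences with no internal zeros is log-concave (a Cauchy--Binet/Hoggar-type fact, which I would cite together with \cite{stanley}). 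Since $(1+x)^c$ has binomial, hence log-concave, coefficients, it then suffices to show that $Q$ has positive, log-concave coefficients.

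Next I write the coefficients of $Q$ explicitly. Put $N=a+b-c$ and $d=a-c\ge 0$. Then $q_0=1$,
$$q_1=N+c+(p-1),\qquad q_k=\binom{N}{k}+\Bigl(\binom{a}{k}-\binom{a-c}{k}\Bigr)\quad (k\ge 2).$$
By Vandermonde, $\binom{a}{k}-\binom{a-c}{k}=\sum_{j\ge1}\binom{c}{j}\binom{d}{k-j}\ge 0$, so all coefficients up to $\deg Q=\max(N,a)$ are positive and there are no internal zeros. Moreover $N-a=b-c=q-1-p(p-1)$, so the top degree of $Q$ is determined by which of $b$ and $c$ is larger.

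I would then verify $q_k^2\ge q_{k-1}q_{k+1}$ by cases, as in the proof for $\varGamma(\mathbb{Z}_{pq})$:
\begin{enumerate}
\item $k=1$, which is the only place where the extra term $(p-1)x$ enters. This is a direct polynomial inequality in $p,q$. It should hold easily, because $q_1$ is of order $a$ while $q_2$ is of order $a^2/2$.
\item $2\le k$ in the range where both pieces are nonzero. Expanding $(X_k+Y_k)^2\ge (X_{k-1}+Y_{k-1})(X_{k+1}+Y_{k+1})$ with $X_k=\binom{N}{k}$ and $Y_k=\binom{a}{k}-\binom{d}{k}$ splits the inequality into three parts. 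The part $X_k^2\ge X_{k-1}X_{k+1}$ is binomial log-concavity. The part $Y_k^2\ge Y_{k-1}Y_{k+1}$ holds because $Y$ is the coefficient sequence of $(1+x)^{d}\bigl((1+x)^{c}-1\bigr)$; the bracket $(1+x)^c-1$ has log-concave coefficients, and the product lemma then applies. The remaining part is the mixed inequality $2X_kY_k\ge X_{k-1}Y_{k+1}+X_{k+1}Y_{k-1}$.
\item The tail range, where only one of $X$ and $Y$ survives (together with the boundary index). Here I argue exactly as in the $k=m+1$ step of the $\varGamma(\mathbb{Z}_{pq})$ theorem, using the ratio form $\binom{n}{k+1}/\binom{n}{k}=\frac{n-k}{k+1}$.
\end{enumerate}
Finally, log-concavity of $Q$ transfers to $I$ via the product lemma, and unimodality follows since a positive log-concave sequence is unimodal \cite{stanley}.

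The main obstacle I expect is the mixed inequality in the second case, since $Y_k$ is not a single binomial coefficient. What I would try first is to use that both $X$ and $Y$ are log-concave and to compare their ratio sequences $X_{k+1}/X_k$ and $Y_{k+1}/Y_k$. If these ratios interlace suitably (for instance if $Y_{k+1}/Y_k\le X_k/X_{k-1}$ and $X_{k+1}/X_k\le Y_k/Y_{k-1}$), the mixed inequality follows by AM--GM as in the $\varGamma(\mathbb{Z}_{pq})$ case. If this interlacing fails, I would fall back on a different grouping. Either $I=(1+x)^{a}\bigl[(1+x)^{b}+(1+x)^{c}-1\bigr]+(p-1)x(1+x)^{c}$, where the first summand is log-concave by the product lemma and the $\varGamma(\mathbb{Z}_{pq})$-type theorem, and the second is a small perturbation that I would absorb at low indices. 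Or I would check the small prime $p=2$, where $c=2$, directly by hand.
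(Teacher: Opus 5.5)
Your factorization $I=(1+x)^{c}Q$ is exactly the paper's: your $Q$ is its $H_{p,q}$. The way you then attack $Q$ does not work.

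\textbf{The mixed inequality is false.} The three-way split in your second case needs $2X_kY_k\ge X_{k-1}Y_{k+1}+X_{k+1}Y_{k-1}$, and this fails in general. Take $p=2$, $q=7$, so $X_k=\binom{10}{k}$ and $Y_k=\binom{6}{k}-\binom{4}{k}$. At $k=5$, where $Y_4,Y_5,Y_6=14,6,1$ are all nonzero, you get $2X_5Y_5=3024<3150=X_4Y_6+X_6Y_4$.

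The failure is structural. Near the top of $Y$, the binomial $X$ is still near its peak while $Y$ falls steeply. So the interlacing $X_{k+1}/X_k\le Y_k/Y_{k-1}$ you hope for fails ($5/6$ against $3/7$ here). The inequality $q_k^2\ge q_{k-1}q_{k+1}$ survives only because the slack $X_k^2-X_{k-1}X_{k+1}$ absorbs a negative cross term.

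The mixed-inequality step of the $\varGamma(\mathbb{Z}_{pq})$ proof that you plan to imitate has the same defect. The inequality $2(k+1)AB\ge kB(B-1)+kA(A-1)$ does not reduce to $2(k+1)\ge k(1-\frac{1}{B})+k(1-\frac{1}{A})$, and already $p=3$, $q=7$, $k=2$ gives $30<40$.

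\textbf{The perturbation is misplaced.} Adding $p-1$ to $q_1$ can only help $q_1^2\ge q_0q_2$. It does enlarge the right-hand side of $q_2^2\ge q_1q_3$, and your second case treats that inequality as if $q_1=X_1+Y_1$.

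\textbf{The missing idea} is to group $Q$ multiplicatively rather than additively:
$$Q(x)-(p-1)x=(1+x)^{d}\bigl((1+x)^{b}+(1+x)^{c}-1\bigr).$$
The bracket is log-concave. The cleanest reason is that it equals $(1+x)^{\min(b,c)}\bigl((1+x)^{|b-c|}+1\bigr)$ with the constant term lowered from $2$ to $1$. Here $(1+x)^j+1$ is log-concave since $j^2\ge 2\binom{j}{2}$, and lowering the constant term only helps. The product lemma then makes $Q-(p-1)x$ log-concave with positive coefficients.

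The term $(p-1)x$ changes only $q_1=pq-1$. So the one inequality needing real work is $q_2^2\ge q_1q_3$. The paper settles it, together with the easy $k=1$ case, from closed forms for $q_2$ and $q_3$. It writes $q_2^2-q_1q_3=\frac{p(q-1)}{12}\Phi(p,q-p)$, where $\Phi$ has nonnegative coefficients after substituting $p=u+2$, $q-p=v+1$.

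Your fallback grouping was close to this. But without first pulling out $(1+x)^{c}$, the perturbation $(p-1)x(1+x)^{c}$ touches the coefficients of degrees $1,\dots,c+1$, and you give no way to control them.
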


\begin{proof}
	For brevity,  we let
	$a=(p-1)(q-p-1),  b=q-1, c=p(p-1),$ and $ d=p-1.$
	Since $p<q$, we have $a\geq 0$, $b\geq 2$, $c\geq 2$, and $d\geq 1$. It is easy to see that
	$
	I(\varGamma(\mathbb{Z}_{p^2q});x)
	=
	(1+x)^c H_{p,q}(x),
	$
	where
	$$
	H_{p,q}(x)
	=
	(1+x)^a\left((1+x)^b+(1+x)^c-1\right)+dx.
	$$
	Clearly,
	$
	c+a+c=(p-1)(p+q-1), 
	c+a+b=p(q-1),
	$
	and
	$
	c+a=(p-1)(q-1).
	$
	 We shall use the following standard fact: if two polynomials have non-negative log-concave coefficient sequences with no internal zeros, then their product also has a log-concave coefficient sequence. This is the usual convolution property of log-concave sequences \cite{stanley}. Further, we note that 
	$
	(1+x)^m+(1+x)^n-1
	$
	has a log-concave coefficient sequence for all positive integers $m,n$. Its coefficients are
	$$
	1,\quad \binom{m}{1}+\binom{n}{1},\quad \binom{m}{2}+\binom{n}{2},\quad \ldots,
	$$
	where $\binom{r}{k}=0$ for $k>r$, and the log-concavity follows by the usual binomial-ratio verification.
	Let 
	$
	J_{p,q}(x)=(1+x)^a\left((1+x)^b+(1+x)^c-1\right).
	$
	The polynomial $(1+x)^a$ is log-concave, and by the preceding observation, the polynomial $(1+x)^b+(1+x)^c-1$ is log-concave.  Hence, $J_{p,q}(x)$ is log-concave. Let
	$
	J_{p,q}(x)=\sum_{k\geq 0}u_kx^k
	$
	and
	$
	H_{p,q}(x)=\sum_{k\geq 0}h_kx^k.
	$
	Since $H_{p,q}(x)=J_{p,q}(x)+dx$, we have
	$
	h_0=u_0=1, h_1=u_1+d,$ and $ h_k=u_k$ for $k\geq 2.$
	Therefore, all log-concavity inequalities for $H_{p,q}(x)$ are inherited from $J_{p,q}(x)$ except possibly the inequalities centered at $h_1$ and $h_2$. Thus, it remains to verify
	$
	h_1^2\geq h_0h_2
	$
	and
	$
	h_2^2\geq h_1h_3.
	$
	 Since
	$
	H_{p,q}(x)
	=
	(1+x)^a\left((1+x)^b+(1+x)^c-1\right)+dx,
	$
	we get
	$
	h_0=1, $ and $h_1=a+b+c+d=pq-1.$ For $k\geq 2$, the term $dx$ does not contribute, so
	$
	h_k=\binom{a+c}{k}+\binom{a+b}{k}-\binom{a}{k}.
	$
	In particular,
	$$
	h_2
	=
	\binom{(p-1)(q-1)}{2}
	+
	\binom{p(q-p)}{2}
	-
	\binom{(p-1)(q-p-1)}{2}=\frac{p(q-1)(pq-3p+1)}{2}.
	$$
	Similarly,
	$$
	h_3
	=
	\binom{(p-1)(q-1)}{3}
	+
	\binom{p(q-p)}{3}
	-
	\binom{(p-1)(q-p-1)}{3},
	$$
	and hence
	$$
	h_3
	=
	\frac{p(q-1)}{6}
	\left(
	3p^3+p^2q^2-8p^2q+p^2+6pq+3p-3q-1
	\right).
	$$
	We first prove $h_1^2\geq h_0h_2$, and with $h_0=1$, it is equivalent to $h_1^2\geq h_2$. Using the expressions above, we have
	$$
	h_1^2-h_2
	=
	(pq-1)^2-\frac{p(q-1)(pq-3p+1)}{2}=
	\frac{p^2q^2+4p^2q-3p^2-5pq+p+2}{2}.
	$$
	To see that this is positive, let $q=p+s$, where $s\geq 1$, and let  $p=u+2, s=v+1$, where $u,v\geq 0$. Thus, 
	$$
	2(h_1^2-h_2)
	=
	u^4+2u^3v+14u^3+u^2v^2+18u^2v+57u^2
	+4uv^2+43uv+88u+4v^2+30v+46.
	$$
	Every term on the right-hand side is non-negative, and the constant term is positive. So, we obtain 
	$
	h_1^2-h_0h_2=h_1^2-h_2>0.
	$	
	It remains to prove $h_2^2\geq h_1h_3$. With values of $h_1$, $h_2$, and $h_3$, we obtain
	$$
	h_2^2-h_1h_3
	=
	\frac{p(q-1)}{12}\Phi(p,q-p),
	$$
	where, if $s=q-p$, then
	$$
	\begin{aligned}
		\Phi(p,s)=&
		p^6+3p^5s-11p^5+3p^4s^2-16p^4s+39p^4+p^3s^3-5p^3s^2  \\
		&+35p^3s-61p^3-4p^2s^2-34p^2s+37p^2+6ps^2+17ps-3p-6s-2.
	\end{aligned}
	$$
	Let $p=u+2$ and $s=v+1$, where $u,v\geq 0$. Then
	$$
	\begin{aligned}
		\Phi(p,s)=&
		u^6+3u^5v+4u^5+3u^4v^2+20u^4v+6u^4+u^3v^3+22u^3v^2\\
		&+68u^3v+18u^3+6u^2v^3+56u^2v^2+126u^2v+43u^2\\
		&+12uv^3+62uv^2+117uv+40u+8v^3+28v^2+44v+12.
	\end{aligned}
	$$
	All coefficients in the above expansion are non-negative, and the constant term is positive. Therefore, $\Phi(p,s)>0$. Since $p(q-1)>0$, it follows that
	$
	h_2^2-h_1h_3>0,
	$, and hence 
	Thus
	$
	h_2^2\geq h_1h_3.
	$
	Thus, it follows that $H_{p,q}(x)$ is log-concave. Finally, since $(1+x)^c$ is log-concave and has no internal zeros, and  $H_{p,q}(x)$ is log-concave with positive coefficients, so their product
	$
	I(\varGamma(\mathbb{Z}_{p^2q});x)=(1+x)^cH_{p,q}(x)
	$
	is log-concave. Hence, the coefficient sequence of $I(\varGamma(\mathbb{Z}_{p^2q});x)$ is log-concave. Moreover, all coefficients of $I(\varGamma(\mathbb{Z}_{p^2q});x)$ are positive. A positive log-concave sequence has no internal zeros and is unimodal \cite{stanley}. Therefore, $I(\varGamma(\mathbb{Z}_{p^2q});x)$ is unimodal.
\end{proof}

Clearly, $x=-1$ is a zero of the polynomial $I(\varGamma(\mathbb{Z}_{p^2q}), x)$. For other zeros the independence polynomial of $\varGamma(\mathbb{Z}_{p^2q}),$ we have the following result, which gives their annular region.
\begin{theorem}
	Let $I(x)$ be the independence polynomial of $\varGamma(\mathbb{Z}_{p^{2}q})$, where $p<q$ are primes. Then  all zeros of $I(x)$ different from $x=-1$ lie in the annular region
	$$
	\left\{x\in\mathbb{C}:\frac{1}{2}<|1+x|<2\right\}.
	$$
\end{theorem}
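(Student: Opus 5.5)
We will work with the substitution $y=1+x$, so that the claim becomes $\tfrac12<|y|<2$ for every zero with $y\neq 0$. Use the notation of the proof of the previous theorem: $a=(p-1)(q-p-1)$, $b=q-1$, $c=p(p-1)$, $d=p-1$. Writing $x=y-1$ in that factorization gives
$$I(x)=y^{c}\,K(y),\qquad K(y)=y^{a+b}+y^{a+c}-y^{a}+d(y-1).$$
The factor $y^c$ accounts for the zero $x=-1$. Every other zero $x$ therefore corresponds to a zero $y$ of $K$; note $K(0)\neq 0$ since $K(0)=-d$ if $a\ge1$ and $K(0)=-1-d$ if $a=0$. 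So it suffices to show that $K$ has no zero with $|y|\ge 2$ and none with $|y|\le \tfrac12$. Both parts are dominant-term estimates via the triangle inequality.

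\emph{Outer bound.} Let $m=\max(b,c)$ and $m'=\min(b,c)$, and suppose $|y|\ge 2$.
\begin{itemize}
\item If $b\neq c$, the leading term is $y^{a+m}$. Since $m-m'\ge1$, we have $|y|^{a+m'}\le \tfrac12|y|^{a+m}$. It then suffices to show $|y|^{a}+d(|y|+1)<\tfrac12|y|^{a+m}$. This holds with room to spare, because $m\ge c=p(p-1)\ge 2$ and $|y|^{m}\ge 2^{p(p-1)}$, which dominates $2+2d(|y|+1)/|y|^{a}$ (a quick check of $|y|^m/2$ against these lower terms).
\item If $b=c$ (for example $p=2$, or $q-1=p^2-p$), the leading term is $2y^{a+m}$, and the same estimate is only easier.
\end{itemize}
In either case $|K(y)|>0$.

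\emph{Inner bound.} Suppose $|y|\le\tfrac12$; here the constant term dominates. I will split into three cases.
\begin{itemize}
\item $p=2,\ q=3$: $K(y)=2y^2+y-2$, whose roots $(-1\pm\sqrt{17})/4$ are checked directly to lie in $(\tfrac12,2)$ in modulus.
\item $p=2,\ q\ge5$: here $a=q-3\ge 2$ and $a+b=a+c=q-1$, so $K(y)=2y^{q-1}-y^{q-3}+y-1$. The non-constant terms have modulus at most $\tfrac{2}{16}+\tfrac14+\tfrac12<1$, so $K(y)\neq0$.
\item $p\ge3$: then $q-p\ge2$ (as $q-p$ is even), so $a\ge1$, $b\ge4$, $c\ge6$, and $d\ge2$. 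The non-constant terms are bounded by $\tfrac1{16}+\tfrac1{64}+\tfrac12+\tfrac d2<d$, so again $K(y)\neq0$.
\end{itemize}
Strictness is automatic because all these bounds are strict.

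The main obstacle is the bookkeeping in the inner bound: the naive estimate $|y|^a\le\tfrac12$ together with $d|y|\le \tfrac d2$ fails when $d=1$. This forces the separate treatment of $p=2$, where the coincidence $a+b=a+c$ and the small exponents must be exploited explicitly. The outer bound has large slack, and I expect it to require only routine care about the case $b=c$.
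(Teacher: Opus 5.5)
Your argument is essentially the paper's proof. The paper also sets $y=1+x$ and writes $I=y^{p(p-1)}Q(y)$ with $Q(y)=y^{A}+y^{B}-y^{C}+(p-1)y-(p-1)$, which is your $K$ with $A=a+c$, $B=a+b$, $C=a$. It then rules out $|y|\le\tfrac12$ and $|y|\ge2$ by triangle-inequality dominance, splitting into $(p,q)=(2,3)$, $p=2$, $p\ge3$ for the inner disc and by the order of $A$ and $B$ outside. You merge the paper's cases $A>B$ and $A<B$ via $m,m'$, and you settle $(2,3)$ by solving the quadratic.

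There is one computational slip to correct.
\begin{itemize}
\item For $p=2$ you have $a=q-3$, $b=q-1$, $c=2$. So $a+b=2q-4\neq q-1=a+c$ once $q\ge5$, and the correct polynomial is $K(y)=y^{2q-4}+y^{q-1}-y^{q-3}+y-1$.
\item Consequently $p=2$, $q\ge5$ lies in the branch $b>c$ of your outer bound, not $b=c$. The case $b=c$ requires $q=p^2-p+1$.
\end{itemize}

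The conclusions survive this slip.
\begin{itemize}
\item On $|y|\le\tfrac12$ you have $|y|^{2q-4}\le|y|^{q-1}$. So your bound $\tfrac{2}{16}+\tfrac14+\tfrac12<1$ still dominates the true non-constant part; the paper gets $\tfrac{53}{64}$.
\item For $|y|\ge2$, your $b\neq c$ estimate goes through for $p=2$, but only if you use $m=b=q-1\ge4$ or $a=q-3\ge2$, not just $m\ge c=2$. With $c=2$, $d=1$ and only $a\ge1$, the inequality $2^{m}>2+2d(|y|+1)/|y|^{a}$ would fail at $|y|=2$, so the claimed ``room to spare'' must come from $b$ or $a$.
\item Record explicitly that the $b\neq c$ branch has $a\ge1$, which keeps $(|y|+1)/|y|^{a}$ bounded. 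This holds because $a=0$ only for $(2,3)$, where $b=c$.
\end{itemize}
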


\begin{proof}
	Since the independence polynomial of $\varGamma(\mathbb{Z}_{p^{2}q})$ is
	$$
	I(x)
	=
	(1+x)^{(p-1)(p+q-1)}
	+
	(1+x)^{p(q-1)}
	-
	(1+x)^{(p-1)(q-1)}
	+
	(p-1)x(1+x)^{p(p-1)}.
	$$
	With 
	$
	y=1+x,
	$ the above polynomial becomes
	$$
	I(y-1)
	=
	y^{(p-1)(p+q-1)}
	+
	y^{p(q-1)}
	-
	y^{(p-1)(q-1)}
	+
	(p-1)(y-1)y^{p(p-1)}.
	$$
	Since $p(p-1)>0$, it follows immediately that $y=0$, that is $x=-1$, is a zero of $I(x)$. We are interested only in the other zeros, so we assume $y\neq 0$.  The polynomial can be written as $
	I(y-1)
	=
	y^{p(p-1)}Q(y),
	$
	where
	$$
	Q(y)
	=
	y^{A}+y^{B}-y^{C}+(p-1)y-(p-1),
	$$
	with
	$
	A=(p-1)(q-1),
	B=p(q-p),$ and $
	C=(p-1)(q-p-1).
	$
	Thus the zeros of $I(x)$ different from $x=-1$ correspond exactly to the non-zero zeros of $Q(y)$. It is therefore enough to prove that every zero $y$ of $Q(y)$ satisfies
	$
	\tfrac{1}{2}<|y|<2.
	$
	We shall prove separately that $Q(y)\neq 0$ for $|y|\leq \tfrac12$ and for $|y|\geq 2$.
	 First suppose that $|y|\leq \tfrac12$, and we will show that $Q(y)\neq 0$. With $r=|y|$, if $Q(y)=0$, then
	$$
	(p-1)
	=
	y^{A}+y^{B}-y^{C}+(p-1)y.
	$$
	Now, with absolute values, we have
	$$
	p-1
	\leq
	r^{A}+r^{B}+r^{C}+(p-1)r.
	$$
	Next, we check that the right-hand side of above inequality is always strictly smaller than $p-1$.
	 If $(p,q)=(2,3)$, then
	$
	Q(y)=2y^{2}+y-2.
	$
	If $|y|\leq \tfrac12$ and $Q(y)=0$, then
	$$
	2=|2y^{2}+y|\leq 2|y|^{2}+|y|\leq 2\left(\frac12\right)^2+\frac12=1,
	$$
	which is impossible. Next, let $p=2$ and $q\geq 5$. Then
	$
	Q(y)=y^{q-1}+y^{2q-4}-y^{q-3}+y-1.
	$
	If $Q(y)=0$, then
	$
	1
	\leq
	r^{q-1}+r^{2q-4}+r^{q-3}+r.
	$
	Since $q\geq 5$ and $r\leq \frac12$, we obtain
	$$
	1
	\leq
	\left(\frac12\right)^4
	+
	\left(\frac12\right)^6
	+
	\left(\frac12\right)^2
	+
	\frac12
	=
	\frac{53}{64}<1,
	$$
	a contradiction. For $p=3$, since $p<q$ and both $3$ and $q$ are primes, so we have $q\geq 5$. Hence,
	$$
	A=2(q-1)\geq 8,\qquad
	B=3(q-3)\geq 6,\qquad
	C=2(q-4)\geq 2.
	$$
	Thus, if $Q(y)=0$ and $|y|\leq \frac12$, then
	$$
	2
	\leq
	\left(\frac12\right)^8
	+
	\left(\frac12\right)^6
	+
	\left(\frac12\right)^2
	+
	2\cdot\frac12
	<2,
	$$
	which is again impossible. For $p\geq 5$, since $p$ and $q$ are odd primes with $p<q$, so we have $q\geq p+2$. Hence, 
	$
	C=(p-1)(q-p-1)\geq p-1\geq 4.
	$
	Also $A\geq 4$ and $B\geq 4$. Therefore, we derive
	$$
	r^{A}+r^{B}+r^{C}+(p-1)r
	\leq
	3\left(\frac12\right)^4+\frac{p-1}{2}
	=
	\frac{3}{16}+\frac{p-1}{2}.
	$$
	Since $p\geq 5$, we have $p-1\geq 4$, and so
	$
	\frac{3}{16}+\frac{p-1}{2}<p-1.
	$
	This contradicts the inequality obtained from $Q(y)=0$. Hence, $Q(y)$ has no zeros in $|y|\leq \frac12$.
	
	It remains to show that $Q(y)$ has no zeros in $|y|\geq 2$. Let $r=|y|\geq 2$. We use the relations
	$
	A-C=p(p-1), 
	B-C=q-1,
	$
	and
	$
	A-B=p(p-1)-(q-1).
	$
	We have to consider three cases. If $A>B$. Then $A-B\geq 1$. If $Q(y)=0$, then
	$
	y^A=-y^B+y^C-(p-1)y+(p-1).
	$
	With absolute values, we get
	$$
	1
	\leq
	r^{B-A}+r^{C-A}+(p-1)r^{1-A}+(p-1)r^{-A}.
	$$
	Since $A-C=p(p-1)$, and $r\geq 2$, this implies that
	$$
	1
	\leq
	2^{-(A-B)}
	+
	2^{-p(p-1)}
	+
	(p-1)2^{-(A-1)}
	+
	(p-1)2^{-A}.
	$$
	For case $A>B$, we necessarily have $p\geq 3$. Since $q\geq p+2$, we obtain
	$
	A=(p-1)(q-1)\geq (p-1)(p+1),
	$
	and hence $A-1\geq p(p-1)$. Thus, we obtain
	$$
	1
	\leq
	\frac12
	+
	2^{-p(p-1)}
	+
	(p-1)2^{-p(p-1)}
	+
	(p-1)2^{-p(p-1)-1},
	$$
	or equivalently,
	$$
	1
	\leq
	\frac12+
	\frac{1+\frac32(p-1)}{2^{p(p-1)}}.
	$$
	But $p\geq 3$, so $p(p-1)\geq 6$, and hence
	$$
	\frac{1+\frac32(p-1)}{2^{p(p-1)}}<\frac12.
	$$
	Therefore, the right-hand side is strictly less than $1$, a contradiction. Hence $Q(y)\neq 0$ for $|y|\geq 2$ in the case $A>B$.
	 Now, for $A=B$, we get 
	$
	Q(y)=2y^A-y^C+(p-1)y-(p-1).
	$
	If $Q(y)=0$, then
	$
	2r^A\leq r^C+(p-1)r+(p-1), 
	$ and thereby we have
	$$
	2\leq r^{C-A}+(p-1)r^{1-A}+(p-1)r^{-A}.
	$$
	Since $A-C=p(p-1)$ and $r\geq 2$, the right-hand side of above inequality  is at most
	$$
	2^{-p(p-1)}+(p-1)2^{1-A}+(p-1)2^{-A},
	$$
	which is strictly less than $2$. Clearly, for $(p,q)=(2,3)$, it is at most
	$
	\frac14+\frac12+\frac14=1,
	$
	and for $p\geq 3$ it is even smaller, since $A$ and $p(p-1)$ are larger. Thus, we get a contradiction. Therefore $Q(y)\neq 0$ for $|y|\geq 2$ in the case $A=B$. Finally, if $B>A$, then
	$
	B-A=(q-1)-p(p-1)>0.
	$
	Since $q$ is odd and $p(p-1)$ is even, the positive integer $B-A$ is in fact at least $2$. If $Q(y)=0$, then
	$$
	y^B=-y^A+y^C-(p-1)y+(p-1),
	$$
	and hence
	$$
	1
	\leq
	r^{A-B}+r^{C-B}+(p-1)r^{1-B}+(p-1)r^{-B}.
	$$
	With $B-C=q-1$, we obtain
	$$
	1
	\leq
	2^{-(B-A)}
	+
	2^{-(q-1)}
	+
	(p-1)2^{1-B}
	+
	(p-1)2^{-B}.
	$$
	Since $B-A\geq 2$ and $q-1\geq 4$, the first two terms are bounded by $\tfrac14$ and $\tfrac1{16}$, respectively. Also, we note that 
	$
	(p-1)2^{1-B}+(p-1)2^{-B}
	=
	\tfrac{3(p-1)}{2^B}.
	$
	If $p=2$, then $q\geq 5$, so $B=2(q-2)\geq 6$, and therefore
	$
	\tfrac{3(p-1)}{2^B}\leq \tfrac{3}{64}.
	$
	If $p\geq 3$, then $B>A$ implies that $q-1>p(p-1)$, and so $B=p(q-p)$ is sufficiently large. In particular,
	$
	\tfrac{3(p-1)}{2^B}<\tfrac14.
	$
	Thus,  in all cases we get
	$
	1
	<
	\tfrac14+\tfrac1{16}+\tfrac14
	<1,
	$
	which is absurd. Therefore, $Q(y)$ has no zeros with $|y|\geq 2$ in the case $B>A$.
	Thus, by combining the two parts, every zero $y$ of $Q(y)$ satisfies
	$
	\tfrac12<|y|<2.
	$
	Since the zeros of $I(x)$ different from $x=-1$ correspond to the zeros of $Q(y)$ under the change of variable $y=1+x$, we conclude that every zero $x\neq -1$ of $I(x)$ satisfies
	$
	\tfrac12<|1+x|<2.
	$
\end{proof}

For $p=3$ and $q=5,$ the independent of $G\cong \varGamma(\mathbb{Z}_{p^2q})$ is
\begin{align*}
	I(G,x) &= 2 x (1 + x)^6 - (1 + x)^8 + (1 + x)^{12} + (1 + x)^{14},\\
	&=(1 + x)^6 (1 + 14 x + 42 x^2 + 76 x^3 + 85 x^4 + 62 x^5 + 29 x^6 + 8 x^7 + x^8).
\end{align*}
and its zeros are \begin{align*}
	(-1)^{6},-2.1584,-0.09376,-1.66756\pm1.00486 i, -0.8428\pm 1.21902 i, -0.3634\pm0.679226 i.
\end{align*}
All the zeros of $I(G,x) $ are shown in (a) Figure \ref{zeros independent 45}. The zeros of $I(G,x) $ other than $x=-1,$ are shown in (b) Figure \ref{zeros independent 45}, and clearly they lie in annular region $\tfrac{1}{2}<|1+x|<2. $
\begin{figure}[H]
	\centering{\scalebox{.35}{\includegraphics{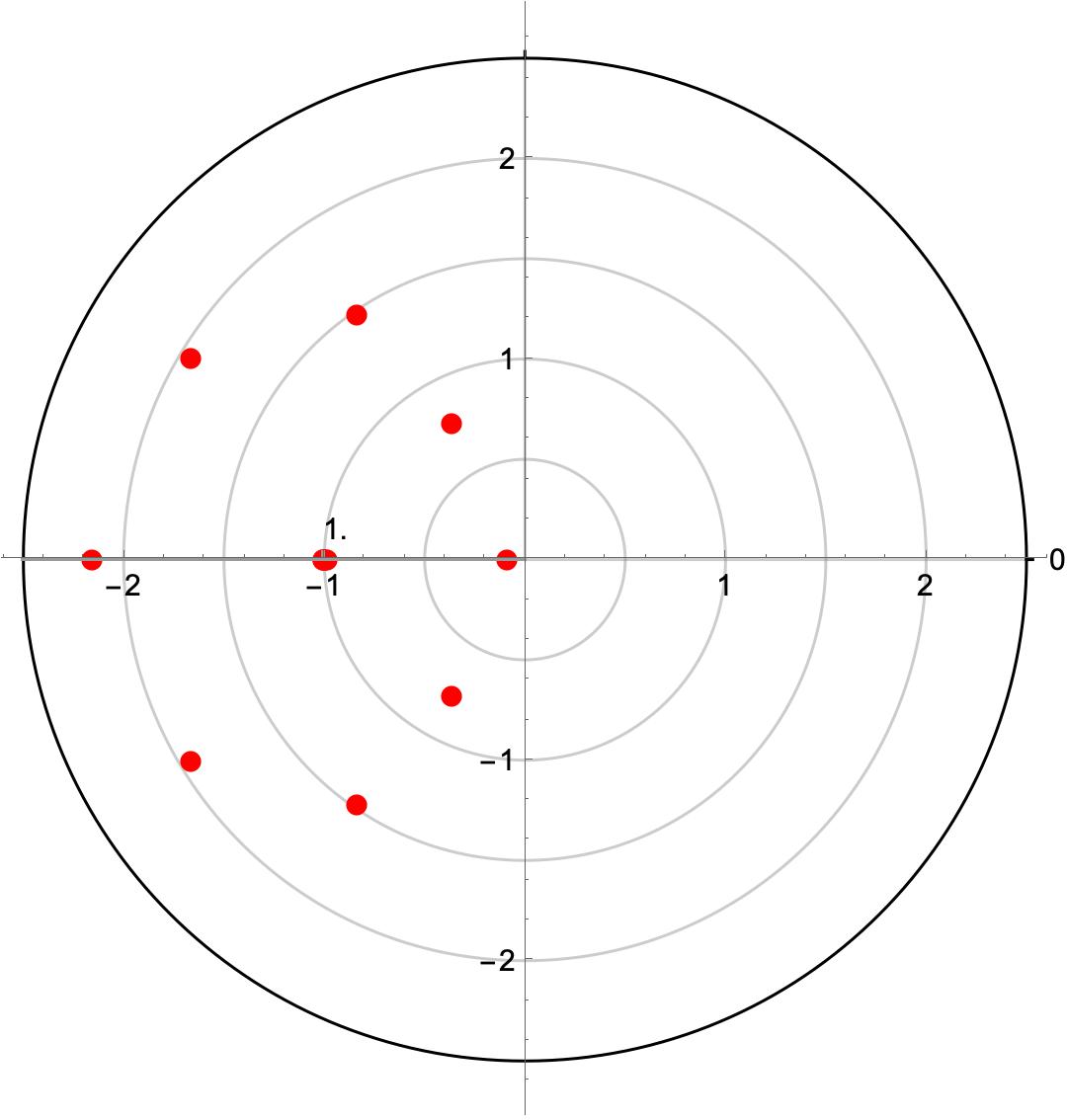}}~\scalebox{.34}{\includegraphics{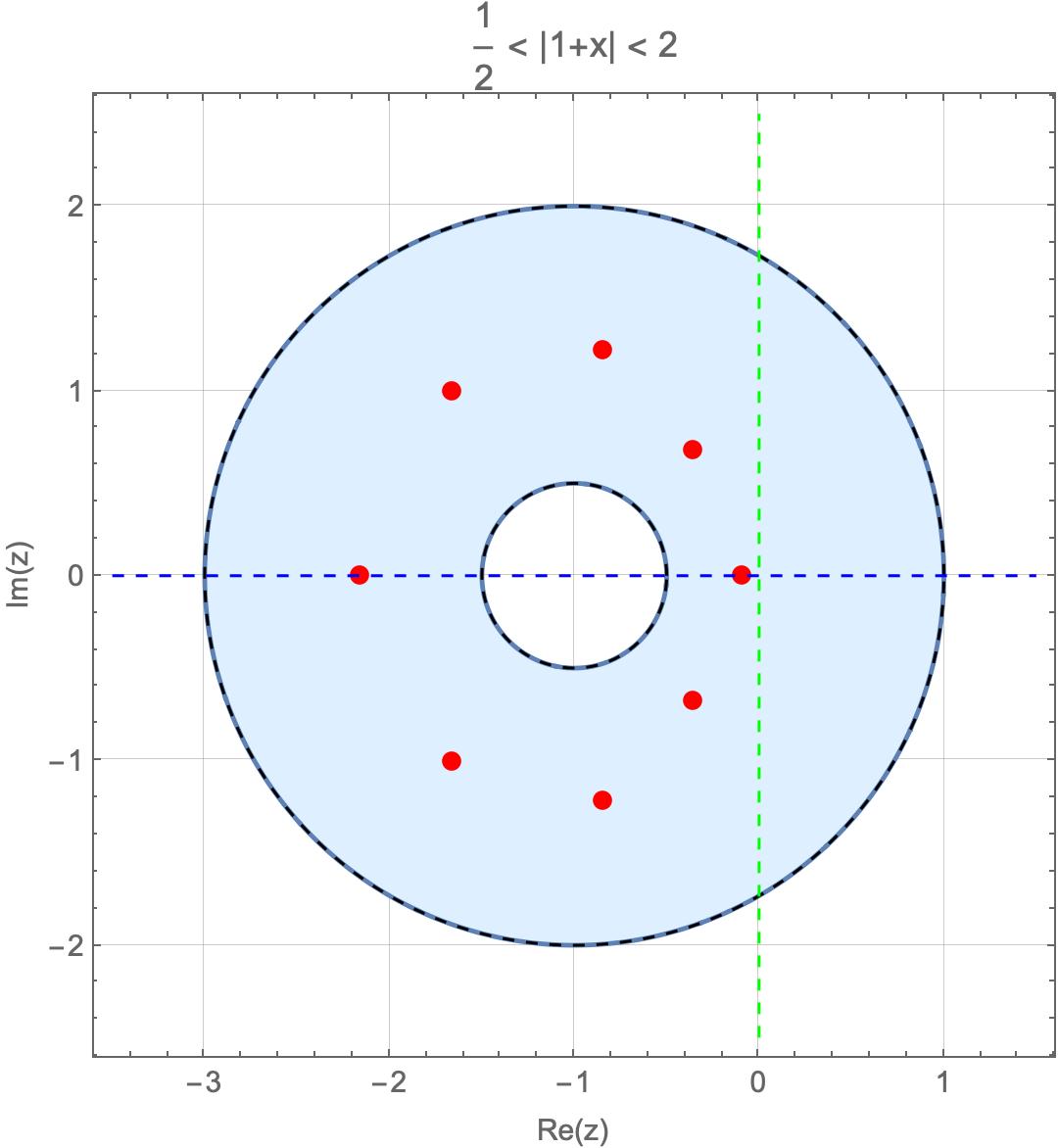}}}\\
	\qquad(a) Zeros of $I(\varGamma(\mathbb{Z}_{45}),x)$\qquad\qquad\qquad (b) Annular region $\tfrac{1}{2}<|1+x|<2$.
	\caption{Zeros of polynomial $I(\varGamma(\mathbb{Z}_{45}),x)$ in plane and zeros in annular region $ \tfrac{1}{2}<|1+x|<2$.}
	\label{zeros independent 45}\end{figure}

The independence polynomial of $\varGamma(\mathbb{Z}_{n})$ for $n\in \{p,p^{2},p^{3},pq,p^{2}q\}$ are unimodal and log-concave. These families provide additional classes of zero-divisor graphs whose independence polynomials are unimodal/log-concave, and contributes to the unimodal conjecture \cite{brown,micheal,alavi,levitejc,micheal,browncameron}.
With the computational experiments and  results presents in this article. We leave the following conjectures for the independence polynomials of zero divisor graph of rings.
\begin{conjecture}
	The independence polynomial of zero divisor graph of ring $\mathbb{Z}_{n}$ is unimodal.
\end{conjecture}
How about unimodality of  the independence polynomial of any zero divisor graph?

\section{Independence polynomial of zero divisor graph of order $pqr$}\label{section 4}
In this section, we give the independence polynomial $I(\varGamma(\mathbb{Z}_{n}),x)$ of graph $\varGamma(\mathbb{Z}_{n}),$ when $n=pqr$, where $p<q<r$ are primes. Then we find the bounds for the zeros of $I(\varGamma(\mathbb{Z}_{n}),x)$ in plane.
\begin{theorem}\label{in poly of zpqr}
	For the commutative ring $ \mathbb{Z}_{pqr}$ with primes $p<q<r$, the independence polynomial of $\varGamma(\mathbb{Z}_{pqr})$ is
	\begin{align*}
		I(\varGamma(\mathbb{Z}_{pqr}),x)=& (1+x)^{pq+qr+pr-2(p+q+r)+3} + (1+x)^{(p-1)(q+r-1)} - (1+x)^{(p-1)(q+r-2)}\\
		& + (1+x)^{(q-1)(p+r-1)} - (1+x)^{(q-1)(p+r-2)} + (1+x)^{(r-1)(p+q-1)}\\
		&- (1+x)^{(r-1)(p+q-2)}.
	\end{align*}
\end{theorem}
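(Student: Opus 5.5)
We would first determine the structure of $\varGamma(\mathbb{Z}_{pqr})$ and then count independent sets by cases, as in the proof of Theorem \ref{ind poly zp^2q}. The proper divisors of $n=pqr$ are $p,q,r,pq,pr,qr$. Using $|V_{\rho}|=\varphi(n/\rho)$, the six vertex classes have sizes
$$|V_p|=(q-1)(r-1),\quad |V_q|=(p-1)(r-1),\quad |V_r|=(p-1)(q-1),$$
$$|V_{pq}|=r-1,\quad |V_{pr}|=q-1,\quad |V_{qr}|=p-1.$$
Since $n$ is squarefree, no $\rho^2$ is a multiple of $n$. Hence each $V_\rho$ induces an edgeless graph $\overline{K}_{|V_\rho|}$, by the structural facts recalled in the introduction.

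Next we read off the adjacencies between classes, using the rule that $V_{\rho_i}$ and $V_{\rho_j}$ are completely joined iff $n\mid \rho_i\rho_j$, and otherwise have no edges between them. This gives the following. The classes $V_{pq},V_{pr},V_{qr}$ are pairwise completely joined, since every product of two of them contains each of $p,q,r$. In addition $V_p$ is joined only to $V_{qr}$, $V_q$ only to $V_{pr}$, and $V_r$ only to $V_{pq}$. All other pairs, such as $V_p,V_q$ or $V_p,V_{pq}$, have no edges between them. So the graph is a ``triangle'' of the classes $V_{pq},V_{pr},V_{qr}$, each with a pendant class attached.

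An independent set $S$ can meet at most one of $V_{pq},V_{pr},V_{qr}$, because these are pairwise completely joined. We therefore partition the family of independent sets into four disjoint collections.

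\emph{Case 1: $S$ avoids all three two-prime classes.} Then $S$ is an arbitrary subset of $V_p\cup V_q\cup V_r$, which is edgeless. This contributes $(1+x)^{|V_p|+|V_q|+|V_r|}$. Expanding the exponent gives $pq+qr+pr-2(p+q+r)+3$.

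\emph{Case 2: $S$ meets $V_{pq}$.} Then $S$ contains no vertex of $V_r$, $V_{pr}$ or $V_{qr}$. So $S$ is a nonempty subset of $V_{pq}$ together with an arbitrary subset of $V_p\cup V_q$, and there are no edges among these three classes. This contributes
$$\big((1+x)^{r-1}-1\big)(1+x)^{|V_p|+|V_q|}.$$
Since $|V_p|+|V_q|=(r-1)(p+q-2)$, this equals $(1+x)^{(r-1)(p+q-1)}-(1+x)^{(r-1)(p+q-2)}$.

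\emph{Cases 3 and 4: $S$ meets $V_{pr}$, respectively $V_{qr}$.} By the symmetric argument these contribute $(1+x)^{(q-1)(p+r-1)}-(1+x)^{(q-1)(p+r-2)}$ and $(1+x)^{(p-1)(q+r-1)}-(1+x)^{(p-1)(q+r-2)}$, respectively.

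Summing the four contributions gives exactly the stated formula. The only step needing real care is the adjacency table: one must verify each of the fifteen pairs of classes against the criterion $n\mid\rho_i\rho_j$, and in particular confirm that a pendant class such as $V_p$ has no edges to $V_{pq}$ or $V_{pr}$. After that, the remaining work is routine exponent arithmetic.
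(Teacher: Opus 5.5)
Your proposal is correct and follows essentially the same route as the paper: the same six-class partition with the same adjacency structure (the three two-prime classes pairwise joined, each with its pendant one-prime class), and the same four-way split of independent sets according to which of $V_{pq},V_{pr},V_{qr}$, if any, they meet. Your Case 2 expression $\big((1+x)^{r-1}-1\big)(1+x)^{|V_p|+|V_q|}$ is the factored form of the paper's difference between the generating functions for subsets of $V_p\cup V_q\cup V_{pq}$ and for subsets of $V_p\cup V_q$.
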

\begin{proof}
	Let $n=pqr$ with $p<q<r$ being prime numbers, and let $ G\cong \varGamma(\mathbb{Z}_{pqr})$ be its zero divisor graph.The set of vertices $V(G)$ can be partitioned based on the greatest common divisor (gcd) with $n$. For a divisor $d$ of $n$, let $V_d = \{v \in V : \gcd(v, n) = d\}$. The possible values for $d$ for any non-zero zero divisor are $p, q, r, pq, pr, $ and $qr$. The set of vertices is the disjoint union $V = V_p \cup V_q \cup V_r \cup V_{pq} \cup V_{pr} \cup V_{qr}$. The size of each partition is given by Euler's totient function $|V_d| = \varphi(\tfrac n d)$. Thus, we have
\begin{align*}
	|V_p| &= \varphi(qr) = (q-1)(r-1), |V_q| = \varphi(pr) = (p-1)(r-1), |V_{pq}|= \varphi(r) = r-1,\\
	|V_r| &= \varphi(pq) = (p-1)(q-1),	  |V_{pr}| = \varphi(q) = q-1, |V_{qr}| = \varphi(p) = p-1.
\end{align*}
Let $a \in V_{\rho_1}$ and $b \in V_{\rho_2}$ be two distinct vertices. Their product $ab$ is congruent to $0 \pmod n$ if and only if for each prime $s \in \{p,q,r\}$, $s \mid ab$. An element $v \in V_d$ is a multiple of $d$ and is not divisible by any prime factor of $\tfrac n d$. Thus, $s \mid ab$ if and only if $s \mid \rho_1$ or $s \mid \rho_2$. Therefore, $a$ and $b$ are adjacent if and only if $\operatorname{lcm}(\rho_1, \rho_2) = pqr$. Also, each of $V_{d}$ are independent set, since for any $a,b \in V_d$, $\operatorname{lcm}(d,d) = d \neq pqr$, where $d\in \{p, q, r, pq, pr, qr\}$ and lcm is their least common multiple. The adjacency relations among $V_{d}$'s are: $V_p$ is completely connected to $V_{qr}$, $V_q$ is completely connected to $V_{pr}$, $V_r$ is completely connected to $V_{pq}$. The sets $V_{pq}, V_{pr}, V_{qr}$ are pairwise completely connected, forming a complete tripartite subgraph, see Figure \ref{fig:zero-divisor-pqr-partition}.  
\begin{figure}[H]
	\centering
	\begin{tikzpicture}[
		part/.style={
			ellipse,
			draw,
			thick,
			minimum width=3.3cm,
			minimum height=1.25cm,
			align=center,
			font=\small
		},
		edge/.style={thick},
		complete/.style={thick}
		]
		
		\node[part, fill=blue!10] (Vp)  at (-4.5,1.4)
		{$V_p$\\ $|V_p|=(q-1)(r-1)$};
		
		\node[part, fill=blue!10] (Vq)  at (0,2.4)
		{$V_q$\\ $|V_q|=(p-1)(r-1)$};
		
		\node[part, fill=blue!10] (Vr)  at (4.5,1.4)
		{$V_r$\\ $|V_r|=(p-1)(q-1)$};
		
		\node[part, fill=green!12] (Vpq) at (-5,-2.2)
		{$V_{pq}$\\ $|V_{pq}|=r-1$};
		
		\node[part, fill=green!12] (Vpr) at (0,-2)
		{$V_{pr}$\\ $|V_{pr}|=q-1$};
		
		\node[part, fill=green!12] (Vqr) at (5,-2.2)
		{$V_{qr}$\\ $|V_{qr}|=p-1$};
		
		\draw[complete] (Vp) -- (Vqr);
		\draw[complete] (Vq) -- (Vpr);
		\draw[complete] (Vr) -- (Vpq);
		
		\draw[complete] (Vpq) -- (Vpr);
		\draw[complete] (Vpr) -- (Vqr);
		\draw[complete] (Vpq) to[bend right=12] (Vqr);
		
	\end{tikzpicture}
	\caption{Partition structure of the zero divisor graph $\varGamma(\mathbb{Z}_{pqr})$, where $p<q<r$ are primes.}
	\label{fig:zero-divisor-pqr-partition}
\end{figure} 
Next, we classify all independent sets in $G.$
Let $\mathcal{I}$ be the set of all independent sets of $\varGamma(\mathbb{Z}_{pqr})$. An independent set cannot contain vertices from two connected partitions. In particular, since $V_{pq}, V_{pr}, V_{qr}$ are pairwise connected, any independent set can have a non-empty intersection with at most one of them. This allows us to partition $\mathcal{I}$ into four disjoint families: 
\begin{align*}
	\mathcal{I}_0 &= \{I \in \mathcal{I} \mid I \subseteq V_p \cup V_q \cup V_r\},\qquad \mathcal{I}_1 = \{I \in \mathcal{I} \mid I \cap V_{pq} \neq \emptyset\},\\
 	\mathcal{I}_2 &= \{I \in \mathcal{I} \mid I \cap V_{pr} \neq \emptyset\},\qquad \mathcal{I}_3 = \{I \in \mathcal{I} \mid I \cap V_{qr} \neq \emptyset\}.
\end{align*}
The independence polynomial $I(x) = \sum_{I \in \mathcal{I}} x^{|I|}$ is the sum of the polynomials for each family. Let $I_k(x)$ be the polynomial for family $\mathcal{I}_k$. For the family $\mathcal{I}_0$, an independent set $I \in \mathcal{I}_0$ is a subset of $V_p \cup V_q \cup V_r$. Since there are no edges within or between these partitions, any subset of $V_p \cup V_q \cup V_r$ is an independent set. Their cardinal number is $|V_p|+|V_q|+|V_r|$, and the corresponding polynomial is
$$I_0(x) = (1+x)^{|V_p|+|V_q|+|V_r|} = (1+x)^{(q-1)(r-1)+(p-1)(r-1)+(p-1)(q-1)}.$$
For the family $\mathcal{I}_1$, we have $I \in \mathcal{I}_1$, and $I \cap V_{pq} \neq \emptyset$. This implies that $I \cap V_{pr} = \emptyset$ and $I \cap V_{qr} = \emptyset$. Due to the edges between $V_{pq}$ and $V_r$, we must have $I \cap V_r = \emptyset$. Thus, $I$ must be a subset of $V_p \cup V_q \cup V_{pq}$. This union of sets induces a subgraph with no edges, so any of its subsets is an independent set. The condition for being in $\mathcal{I}_1$ is that the subset must have a non-empty intersection with $V_{pq}$.
The generating function for all subsets of $V_p \cup V_q \cup V_{pq}$ is $(1+x)^{|V_p|}(1+x)^{|V_q|}(1+x)^{|V_r|},$ which is equivalent to
$(1+x)^{|V_p|+|V_q|+|V_{pq}|}$.
The generating function for subsets that do not intersect $V_{pq}$ (that is, subsets of $V_p \cup V_q$) is $(1+x)^{|V_p|+|V_q|}$.
So, we obtain $$I_1(x) = (1+x)^{|V_p|+|V_q|+|V_{pq}|} - (1+x)^{|V_p|+|V_q|}.$$
Now, with the values $$|V_p|+|V_q|+|V_{pq}| = (r-1)(q-1+p-1+1) = (r-1)(p+q-1),$$
and  $|V_p|+|V_q| = (q-1)(r-1)+(p-1)(r-1) = (r-1)(p+q-2)$. Thus,  we have
$$I_1(x) = (1+x)^{(r-1)(p+q-1)} - (1+x)^{(r-1)(p+q-2)}.$$
Again for the family $\mathcal{I}_2$, we have  $I \in \mathcal{I}_2$, and $I \cap V_{pr} \neq \emptyset$. This implies that $I \cap V_q = \emptyset$. So, $I$ is a subset of $V_p \cup V_r \cup V_{pr}$ with $I \cap V_{pr} \neq \emptyset$. With the similar idea as above we have
$$|V_p|+|V_r|+|V_{pr}| = (q-1)(r-1)+(p-1)(q-1)+(q-1) = (q-1)(p+r-1),$$
and  $|V_p|+|V_r| = (q-1)(r-1)+(p-1)(q-1) = (q-1)(p+r-2)$. Thus, with these values, we have
$$I_2(x) = (1+x)^{(q-1)(p+r-1)} - (1+x)^{(q-1)(p+r-2)}.$$
Finally for the family $\mathcal{I}_3$, we have $I \in \mathcal{I}_3$, $I \cap V_{qr} \neq \emptyset$. This implies that $I \cap V_p = \emptyset$. So, $I$ is a subset of $V_q \cup V_r \cup V_{qr}$ with $I \cap V_{qr} \neq \emptyset$. Thus, the size of sets are 
$|V_q|+|V_r|+|V_{qr}| = (p-1)(r-1)+(p-1)(q-1)+(p-1) = (p-1)(q+r-1)$, and  $|V_q|+|V_r| = (p-1)(r-1)+(p-1)(q-1) = (p-1)(q+r-2)$. The generating function is
$$I_3(x) = (1+x)^{(p-1)(q+r-1)} - (1+x)^{(p-1)(q+r-2)}.$$
The total independence polynomial $I(\varGamma(\mathbb{Z}_{pqr}),x)$ of $\varGamma(\mathbb{Z}_{pqr})$ is $I(\varGamma(\mathbb{Z}_{pqr}),x) = I_0(x) + I_1(x) + I_2(x) + I_3(x)$. Substituting the expressions and sizes gives the final expression as in statement.
\end{proof}

Next, we discuss the zeros of the polynomial $I(\varGamma(\mathbb{Z}_{n}),x)$ in plane and illustrate result with an example.
\begin{theorem}\label{zeros pqr}
	Let $I(\varGamma(\mathbb{Z}_{pqr}),x)$ be the independence polynomial of $\varGamma(\mathbb{Z}_{pqr})$, where $p<q<r$ are primes. Then $x=-1$ is a zero of $I(\varGamma(\mathbb{Z}_{pqr}),x)$ with multiplicity $(p-1)(q+r-2)$, and every other zero $x$ of $I(\varGamma(\mathbb{Z}_{pqr}),x)$ satisfies
	$
	\tfrac12<|1+x|<2.
	$
\end{theorem}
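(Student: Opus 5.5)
Throughout, put $y=1+x$ and write the polynomial of Theorem \ref{in poly of zpqr} as
$$P(y)=y^{E}+y^{a_1}-y^{b_1}+y^{a_2}-y^{b_2}+y^{a_3}-y^{b_3},$$
where $E=pq+qr+pr-2(p+q+r)+3$, $a_1=(r-1)(p+q-1)$, $b_1=(r-1)(p+q-2)$, $a_2=(q-1)(p+r-1)$, $b_2=(q-1)(p+r-2)$, $a_3=(p-1)(q+r-1)$ and $b_3=(p-1)(q+r-2)$. The plan follows the proof of the corresponding result for $\varGamma(\mathbb{Z}_{p^2q})$: factor out the lowest power of $y$, and then show by the triangle inequality that the cofactor has no zeros in $|y|\le\frac12$ or in $|y|\ge 2$.

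\textbf{Multiplicity of $x=-1$.} I first compare the exponents directly:
$$b_2-b_3=(q-p)(r-1)>0,\qquad b_1-b_2=(r-q)(p-1)>0,\qquad a_i-b_i\in\{r-1,\;q-1,\;p-1\},$$
and $E$ is the largest of all the exponents apart from the $a_i$. So $b_3$ is the smallest exponent. The nearest competing exponents are $a_3=b_3+(p-1)$ and $b_2=b_3+(q-p)(r-1)$, and both exceed $b_3$. Hence $y^{b_3}$ is the only term of minimal degree and no cancellation can occur there. This gives $P(y)=y^{b_3}Q(y)$ with $Q(0)=-1\neq 0$, so $x=-1$ has multiplicity exactly $(p-1)(q+r-2)$.

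\textbf{Lower bound $|y|>\tfrac12$.} The lowest nonconstant term of $Q$ is $y^{p-1}$. All other exponents of $Q$ are at least $\min\{b_2-b_3,\;a_3-b_3+\dots\}$, which I will check is at least $4$ in every case; the case $(p,q,r)=(2,3,5)$ is the tightest. Suppose $Q(y)=0$ and $|y|=\rho\le\frac12$. Writing $1=y^{p-1}+(\text{five other monomials})$ and taking absolute values gives $1\le \rho^{p-1}+5\rho^{4}$. For $p=2$ this is at most $\frac12+\frac5{16}<1$, and for $p\ge3$ it is even smaller, a contradiction. Coincidences among exponents do not matter here, since the triangle inequality is applied term by term.

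\textbf{Upper bound $|y|<2$.} Since $a_1-a_2=p(r-q)>0$ and $a_2>a_3$, the top degree of $P$ is $M=\max\{E,a_1\}$. Compute
$$E-a_1=(p-1)(q-1)-(r-1).$$
This difference has the same parity as $(p-1)(q-1)$ because $r-1$ is even: it is even when $p$ is odd, and when $p=2$ it equals $q-r$, which is even and negative. Consequently either $E=a_1$, giving leading coefficient $2$, or $|E-a_1|\ge 2$. Now divide $Q(y)=0$ by $y^{M-b_3}$ and use $|y|\ge2$. The leading coefficient ($1$ or $2$) must then be bounded by a sum of terms $2^{-g}$, one for each gap $g$ between $M$ and another exponent. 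These gaps are:
\begin{enumerate}
\item $|E-a_1|\ge 2$, contributing at most $\frac14$;
\item $a_1-b_1=r-1\ge 4$;
\item $a_1-a_2=p(r-q)\ge 4$;
\item the remaining gaps, which are larger still.
\end{enumerate}
Summing, the right-hand side is at most about $\frac14+\frac1{16}+\frac1{16}+\dots<1$, which is a contradiction.

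The main obstacle is this case analysis on the sign of $E-a_1$, exactly as in the three cases $A>B$, $A=B$, $B>A$ of the previous theorem. When $M=E$ the gaps must be measured from $E$ rather than from $a_1$, and I need to confirm that every other gap exceeds $1$ and that the total stays below $1$. I would first treat $p=2$ separately, where $E<a_1$ always holds and the exponents are explicit in $q$ and $r$. Then for $p\ge3$ I would bound each gap below by $2$ or $4$ uniformly.
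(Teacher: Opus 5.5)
Your multiplicity count and your lower bound follow the paper's argument. The paper also factors out $y^{(p-1)(q+r-2)}$, and for $|y|\le\tfrac12$ it uses the same monomial-by-monomial estimate, packaged as Rouch\'e's theorem against the constant $-1$. Your direct contradiction is equivalent; the smallest exponent you need there is $b_2-b_3=(q-p)(r-1)\ge4$.

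The upper bound is where you take a different route. Write $a=p-1$, $b=q-1$, $c=r-1$. The paper does not split into cases according to which of $E$, $a_1$ is larger. Instead it checks three things about the exponents of the cofactor:
\begin{itemize}
\item among the positive exponents $a$, $bc$, $c(b-a)+b$, $b(c-a)+c$, the only possible coincidence is $bc=b(c-a)+c$ (i.e.\ $c=ab$), and this occurs at the top degree;
\item the two negative exponents are distinct;
\item hence every non-leading coefficient lies in $\{-1,0,1\}$, and the leading one is $1$ or $2$.
\end{itemize}
The Cauchy-type estimate $|R(y)|\le|c_d|(2^d-1)<|c_dy^d|$ on $|y|=2$ then finishes. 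That costs a coincidence analysis but needs no case split and no information about how the exponents are spaced.

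Your route avoids the coincidence analysis, since the triangle inequality is applied term by term. In exchange it needs $M=\max\{E,a_1\}$ and gap bounds in each case. It also exploits the sparsity of the exponents. Using $|E-a_1|\ge2$ from your parity remark, the same estimates in fact confine the zeros to $|y|<\tfrac32$, which the coefficient-size argument cannot detect. For the radius $2$ alone the parity remark is unnecessary: a top gap of $1$ would contribute only $\tfrac12$, against a remainder below $\tfrac12$.

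The case $M=E>a_1$ that you leave open forces $p\ge3$, and it is routine. The gaps from $E$ are $ab-c\ge2$, $ab\ge8$, $ac-b\ge8$, $ac\ge12$, $bc-a\ge22$ and $bc\ge24$. So the right-hand side is at most $2^{-2}+2\cdot2^{-8}+2^{-12}+2^{-22}+2^{-24}<1$, the desired contradiction. The case $E=a_1$ is immediate, because the left side is then $2$ while every remaining gap is at least $4$.
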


\begin{proof}
	We use compact notations with 
	$a=p-1,b=q-1,$ and $ c=r-1.$ So, it is clear that  $1\leq a<b<c$. In the independence polynomial of $\varGamma(\mathbb{Z}_{pqr})$, we let $y=1+x$, and obtain
	\begin{align*}
		I(y-1)
		=&\,y^{ab+bc+ca}+y^{ab+ac+a}-y^{ab+ac}+y^{ab+bc+b}-y^{ab+bc}+y^{ac+bc+c}-y^{ac+bc}.
	\end{align*}
	Since $a<b<c$, the smallest exponent appearing in this expression is $ab+ac=a(b+c)$, and its coefficient is $-1$. Hence, $y=0$ is a zero of $I(y-1)$ with multiplicity $a(b+c)$, and thereby  $x=-1$ is a zero of $I(x)$ with multiplicity
	$
	a(b+c)=(p-1)(q+r-2).
	$We can put $I(y-1)$ as $I(y-1)=y^{ab+ac}H(y),$ 
	where
	$$
	H(y)
	=
	-1+y^a+y^{bc}+y^{c(b-a)+b}-y^{c(b-a)}+y^{b(c-a)+c}-y^{b(c-a)}.
	$$
	 We first prove that $H(y)$ has no zeros in $|y|\leq \tfrac12$. On the circle $|y|=\tfrac12$, we write
	$
	H(y)=-1+S(y),
	$
	where
	$$
	S(y)=y^a+y^{bc}+y^{c(b-a)+b}-y^{c(b-a)}+y^{b(c-a)+c}-y^{b(c-a)}.
	$$
	Since $a\geq 1$, $bc\geq 8$, $c(b-a)\geq 4$, $c(b-a)+b\geq 6$, $b(c-a)\geq 6$, and $b(c-a)+c\geq 10$, we have
	$$
	|S(y)|
	\leq
	2^{-a}+2^{-bc}+2^{-(c(b-a)+b)}+2^{-c(b-a)}
	+2^{-(b(c-a)+c)}+2^{-b(c-a)}.
	$$
	Thus,
	$$
	|S(y)|
	\leq
	\frac12+\frac{1}{256}+\frac{1}{64}+\frac{1}{16}
	+\frac{1}{1024}+\frac{1}{64}
	<1.
	$$
	Hence, on $|y|=\tfrac12$, we have
	$
	|S(y)|<|-1|=1.
	$
	By Rouché's theorem, $H(y)$ and the constant polynomial $-1$ have the same number of zeros in $|y|<\tfrac12$. Therefore, $H(y)$ has no zeros in $|y|<\tfrac12$. The strict inequality also shows that $H(y)\neq 0$ on $|y|=\tfrac12$. Hence, every zero of $H(y)$ satisfies
	$
	|y|>\tfrac12.
	$ Next, we prove that every zero of $H(y)$ satisfies $|y|<2$. Let
	$
	H(y)=\sum_{j=0}^{d}c_jy^j,
	$
	where $d=\deg H$ and $c_d\neq 0$. We claim that
	$|c_j|\leq |c_d|$, for all $0\leq j<d.$
	Clearly, the only possible coincidence between two positive exponents in
	$
	a, bc,c(b-a)+b,$ and $ b(c-a)+c
	$
	is
	$
	bc=b(c-a)+c,
	$
	which is equivalent to $c=ab$. If this occurs, it occurs at the largest exponent and gives the leading coefficient $c_d=2$. Otherwise the leading coefficient has modulus $1$. The two negative exponents $c(b-a)$ and $b(c-a)$ are distinct, and any coincidence between a positive and a negative exponent only causes cancellation. Therefore, every non-leading coefficient has modulus at most $1$, while the leading coefficient has modulus either $1$ or $2$. Now, let 
	$
	H(y)=c_dy^d+R(y),
	$
	where
	$
	R(y)=\sum_{j=0}^{d-1}c_jy^j.
	$
	On the circle $|y|=2$, we have
	$$
	|R(y)|
	\leq
	\sum_{j=0}^{d-1}|c_j|2^j
	\leq
	|c_d|\sum_{j=0}^{d-1}2^j
	=
	|c_d|(2^d-1),
	$$
	and hence 
	$
	|R(y)|<|c_d|2^d=|c_dy^d|.
	$
	By Rouché's theorem, $H(y)$ and $c_dy^d$ have the same number of zeros in $|y|<2$, counted with multiplicity. Since $H$ has degree $d$, all zeros of $H$ lie in $|y|<2$. The strict inequality also excludes zeros on $|y|=2$.
	 Combining the two facts, every zero $y$ of $H(y)$ satisfies
	$
	\tfrac12<|y|<2.
	$
	Thus, with $y=1+x$, every zero $x\neq -1$ of $I(\varGamma(\mathbb{Z}_{pqr}),x)$ satisfies
	$
	\tfrac12<|1+x|<2.
	$ 
\end{proof}

The independence polynomial of $\varGamma(\mathbb{Z}_{30})$ is 
\begin{align*}
	I(\varGamma(\mathbb{Z}_{30}))=&(x+1)^{16}+(x+1)^{14}-(x+1)^{10}+(x+1)^7-(x+1)^6\\
	=&(x+1)^6 (x^{10}+10 x^9+46 x^8+128 x^7+238 x^6+308 x^5+279 x^4+172 x^3\\
	&+67 x^2+15 x+1).
\end{align*}
The zeros of $I(\varGamma(\mathbb{Z}_{30})) $ are 
\begin{align*}
	&(-1)^{6},-2.05728,-0.104067,-1.68807\pm 0.681782 i,-1.29235\pm 1.12643 i,\\
	&-0.588973\pm 1.08943 i, -0.349935\pm  0.436093i.
\end{align*}
The zero set of $ I(\varGamma(\mathbb{Z}_{30}),x)$ is plotted in (a) Figure \ref{zeros independent 30}, and its zeros $x$ (other than $x=-1$) lying in  the annular region $\tfrac{1}{2}<|1+x|<2 $ is shown in (b) Figure \ref{zeros independent 30}.
\begin{figure}[H]
	\centering{\scalebox{.3}{\includegraphics{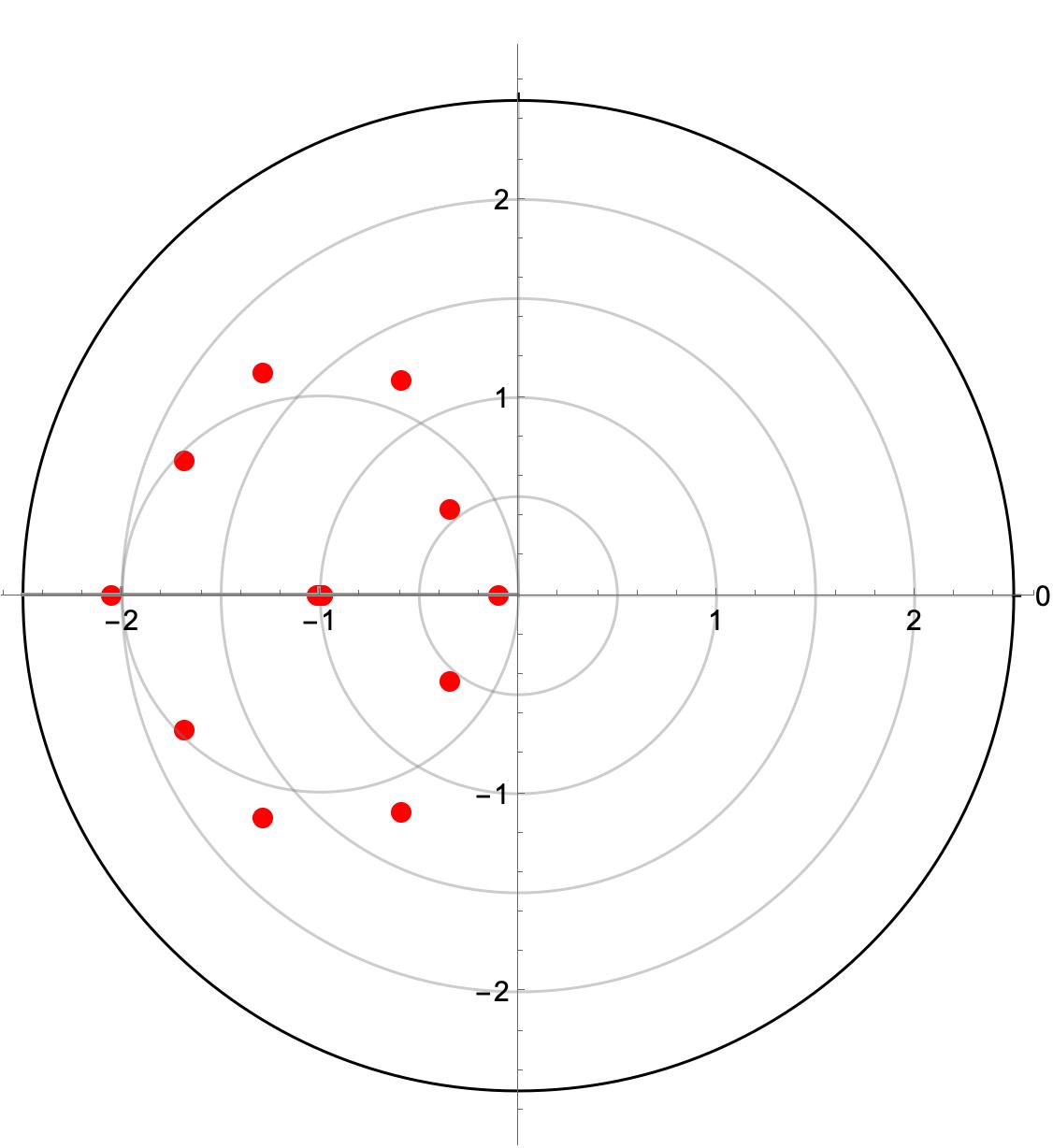}}\quad  \scalebox{.31}{\includegraphics{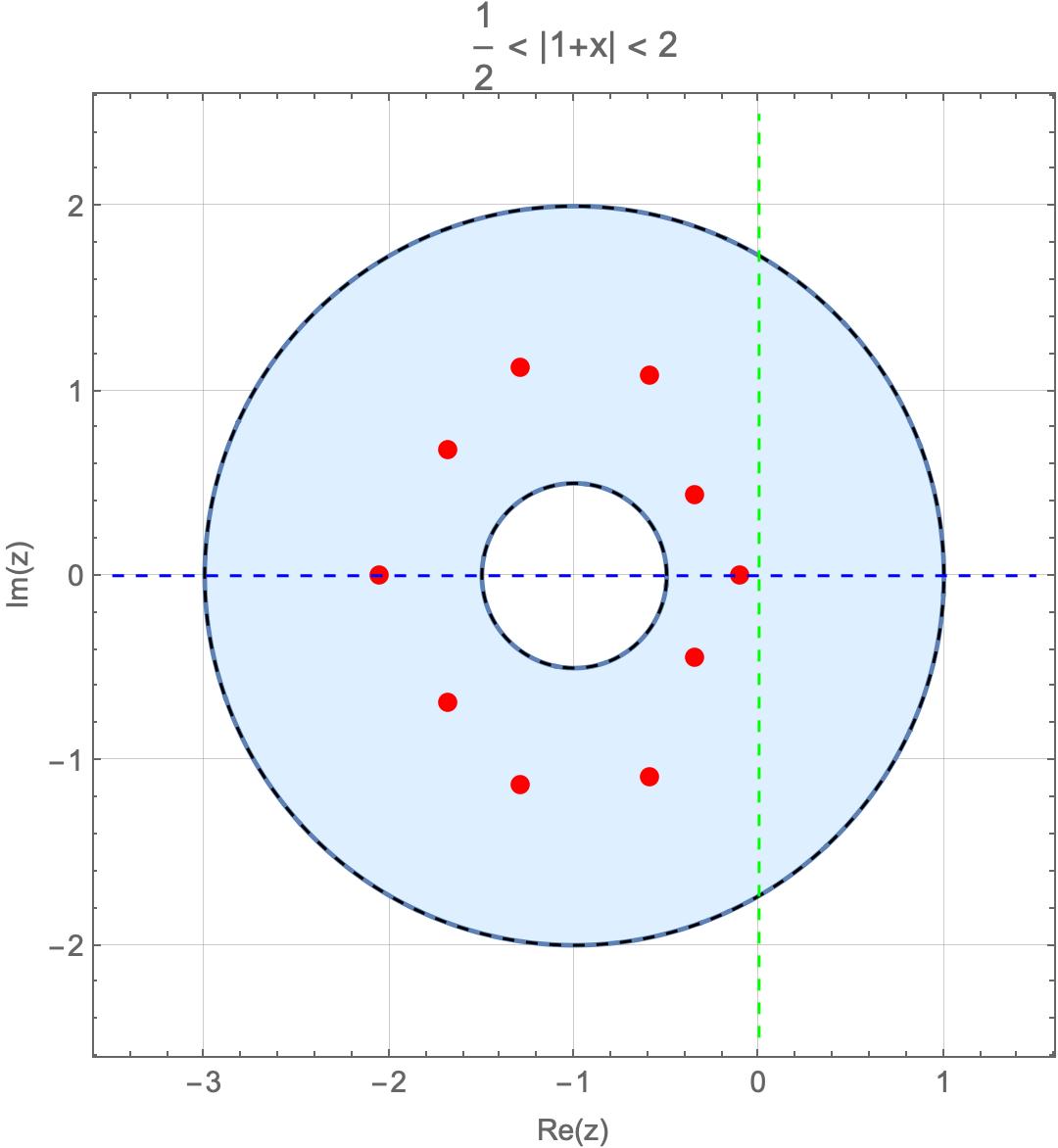}}}\\
	\quad (a) Zeros of $I(\varGamma(\mathbb{Z}_{30}),x)$ \qquad \quad \qquad (b) Annular $\tfrac{1}{2}<|1+x|<2$
	\caption{Zeros of  $I(\varGamma(\mathbb{Z}_{30}),x)$, and its complex zero $z$ satisfy $\tfrac{1}{2}<|1+x|<2. $}
	\label{zeros independent 30}
\end{figure}

Eneström-Kakeya theorem (see, \cite{barbeau}),  states that:  Given a real polynomial $e(x)=\sum_{i=0}^{n} e_{i}x^{i}$ with $e_{i}\geq 0$,  the zeros of $e(x)$ lie in the annulus region: $r\leq |x|\leq R,$
where 
$$r=\min \left \{\left|\frac{e_{i}}{e_{i+1}}\right| : 0\leq i\leq n-1  \right \} ~\text{and}~R=\max \left  \{\left|\frac{e_{i}}{e_{i+1}}\right| : 0\leq i\leq n-1  \right \}.$$ Next, we apply Eneström-Kakeya theorem to polynomial $I(\varGamma(\mathbb{Z}_{pqr}),x) $ and compare the region with Theorem \ref{zeros pqr}. We have the following proposition.
\begin{proposition}
	The maximum exponent in the polynomial $I(\varGamma(\mathbb{Z}_{pqr}),x)$ is $(r-1)(p+q-1)$ if $p=2$, and for $p \ge 3$ it is $(r-1)(p+q-1)$, if $r-1 > (p-1)(q-1)$ and $pq+qr+pr-2(p+q+r)+3$, if $r-1 < (p-1)(q-1)$. If $r-1=(p-1)(q-1)$, these two expressions are equal and maximal. The minimal exponent (degree) of $I(\varGamma(\mathbb{Z}_{pqr}),x)$ is $(p-1)(q+r-2)$.
\end{proposition}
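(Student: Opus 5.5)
The plan is to compare the seven exponents of $(1+x)$ in Theorem \ref{in poly of zpqr} directly, using the same shorthand as in the proof of Theorem \ref{zeros pqr}: $a=p-1$, $b=q-1$, $c=r-1$, with $1\le a<b<c$. The positive terms carry the exponents
$$E_0=ab+bc+ca,\quad E_1=a(b+c)+a,\quad E_2=b(a+c)+b,\quad E_3=c(a+b)+c,$$
and the negative terms carry $E_1'=a(b+c)$, $E_2'=b(a+c)$ and $E_3'=c(a+b)$. Note that $E_0=pq+qr+pr-2(p+q+r)+3$ and $E_3=(r-1)(p+q-1)$. The argument is a sequence of pairwise differences, each of which factors and has an evident sign because $a<b<c$.

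\textbf{Maximum exponent.} First I will show that only $E_0$ and $E_3$ can be largest, using
$$E_0-E_1=bc-a>0,\qquad E_0-E_2=ca-b\ge c-b>0,\qquad E_3-E_2=(a+1)(c-b)>0.$$
Each negative exponent is smaller than its positive partner, since $E_i'<E_i$, so none of them can be the maximum. The decisive comparison is
$$E_3-E_0=c-ab=(r-1)-(p-1)(q-1).$$
Its sign gives the two cases for $p\ge3$: the maximum is $E_3$ when $r-1>(p-1)(q-1)$ and $E_0$ when $r-1<(p-1)(q-1)$. When $r-1=(p-1)(q-1)$ the two exponents coincide and are both maximal. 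For $p=2$ we have $a=1$, so $c-ab=c-b>0$ and the maximum is always $E_3=(r-1)(q+1)$.

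To read "maximum exponent" as the degree of the polynomial, I must also check that the top coefficient does not cancel. The maximal exponent comes only from positive terms, with coefficient $1$, or $2$ in the tie case. All negative terms have strictly smaller exponents, so the top coefficient is nonzero.

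\textbf{Minimal exponent.} The minimum comes from
$$E_2'-E_1'=c(b-a)>0,\qquad E_3'-E_1'=b(c-a)>0,\qquad E_1-E_1'=a>0.$$
In addition, every positive exponent $E_i$ exceeds the corresponding $E_i'$, and $E_0>E_1$. Hence $E_1'=(p-1)(q+r-2)$ is the unique smallest exponent, with coefficient $-1$. This agrees with the multiplicity of the zero $x=-1$ found in Theorem \ref{zeros pqr}.

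The main difficulty is bookkeeping rather than depth. I have to make sure that every one of the seven exponents is compared and that no coincidence of exponents causes cancellation at the extreme terms. The key structural fact making this work is that the maximal and minimal exponents are each attained only by terms of a single sign.
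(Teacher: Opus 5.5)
Your proposal is correct and follows essentially the same route as the paper: you substitute $a=p-1$, $b=q-1$, $c=r-1$, compare the seven exponents pairwise, and the decisive difference is $E_3-E_0=c-ab$ (the paper's $D_1-A$). Your bound $E_0-E_2=ca-b\ge c-b>0$ is slightly cleaner than the paper's parity-based argument for $A>C_1$. Your explicit check that the extreme coefficients do not cancel also fills in a point the paper leaves implicit: it shows the maximal exponent really is the degree, with leading coefficient $1$ or $2$.
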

\begin{proof} Let $I(\varGamma(\mathbb{Z}_{pqr}),x) $ be the independence polynomial with  primes be $p<q<r$, and let $x=p-1, y=q-1, z=r-1$. Since $p,q,r$ are primes, $p \ge 2, q \ge 3, r \ge 5$, which implies $x \ge 1, y \ge 2, z \ge 4$. The condition $p<q<r$ implies $x<y<z$. In order to simplify calculations, we consider the following assignments 
	\begin{align*}
	 A &= pq + qr + pr - 2(p + q + r) + 3 = (p-1)(q-1) + (q-1)(r-1) + (r-1)(p-1) \\
	 &= xy+yz+zx,\\
	 A_1 &= (p-1)(q+r-1) = x(y+1+z+1-1) = x(y+z+1) = xy+xz+x,\\
	 A_2 &= (p-1)(q+r-2) = x(y+1+z+1-2) = x(y+z) = xy+xz,\\
	 C_1 &= (q-1)(p+r-1) = y(x+1+z+1-1) = y(x+z+1) = xy+yz+y,\\
	 C_2 &= (q-1)(p+r-2) = y(x+1+z+1-2) = y(x+z) = xy+yz,\\
	 D_1 &= (r-1)(p+q-1) = z(x+1+y+1-1) = z(x+y+1) = xz+yz+z,\\
	 D_2 &= (r-1)(p+q-2) = z(x+1+y+1-2) = z(x+y) = xz+yz.
	\end{align*}
	Next, we we establish clear inequalities within above pairs of terms.  Consider $A_1 - A_2 = (xy+xz+x) - (xy+xz) = x = p-1 > 0, $ it implies that $ A_1 > A_2$. Similarly,  $C_1 - C_2 = (xy+yz+y) - (xy+yz) = y = q-1 > 0 $ implies that $C_1 > C_2$, and  $D_1 - D_2 = (xz+yz+z) - (xz+yz) = z = r-1 > 0 $ implies that $ D_1 > D_2$. Now, we look for minimum among  $\{A, A_2, C_2, D_2\}$.  Consider $C_2 - A_2 = (xy+yz) - (xy+xz) = yz-xz = z(y-x)$, then we get  $C_2 > A_2$, as  $z>0$ and $y>x$. Also, $D_2 - C_2 = (xz+yz) - (xy+yz) = xz-xy = x(z-y)$ implies that $D_2 > C_2$, since $x>0$ and $z>y$. Thus, $A_2 < C_2 < D_2$. Again, $A - A_2 = (xy+yz+zx) - (xy+xz) = yz$ gives us $A > A_2$, as $y,z>0$. Thus, from these comparisons, $A_2=(p-1)(q+r-2)$ is the smallest among $\{A, A_2, C_2, D_2\}$. Now, we search for the maximum exponent among $\{A, A_1, C_1, D_1\}$. With $C_1 - A_1 = (xy+yz+y) - (xy+xz+x) = yz-xz+y-x = z(y-x)+(y-x) = (z+1)(y-x)$ we have $C_1 > A_1$, as$z>0, y >x$. Again from
 	$D_1 - C_1 = (xz+yz+z) - (xy+yz+y) = xz-xy+z-y = x(z-y)+(z-y) = (x+1)(z-y)$ we have $D_1 > C_1$, since $x>0, z>y$. So, we get ordering $A_1 < C_1 < D_1$. Now compare $A$ with $C_1$, we have
	$$A - C_1 = (xy+yz+zx) - (xy+yz+y) = zx-y = (r-1)(p-1)-(q-1).$$
	If $p=2$, $A-C_1 = (r-1)-(q-1) = r-q > 0$.  If $p \ge 3$, then $p,q,r$ are odd primes, so $q \ge p+2$ and $r \ge q+2$, and 
	$$A-C_1 = (p-1)(r-1)-(q-1) \ge (p-1)(q+1)-(q-1) =  pq+p-2q = q(p-2)+p.$$
	As $p \ge 3$, $p-2 \ge 1$, so $q(p-2)+p > 0$, and from above we get $A > C_1$, which is valid for all primes $p,q,r$. Thus, the maximum exponent is either $A$ or $D_1$. Now, we compare them, and find the maximum exponent.  With $D_1 - A = (xz+yz+z) - (xy+yz+zx) = z-xy = (r-1)-(p-1)(q-1)$, we see that  the sign of this difference is not constant, and depends on the specific values of $p,q,r$. If $p=2$, then $D_1 - A = (r-1) - (2-1)(q-1) = r-1-q+1 = r-q > 0$. So, in this case, $D_1$ is always greater than $A$.  If $p \ge 3$, then the sign of $D_1-A$ depends on the sign of $(r-1)-(p-1)(q-1)$. If $r\geq 11$, then  $(r-1)-(p-1)(q-1)> 0$, and it follows that $D_1 > A$. Therefore, the maximum element is not represented by a single expression for all primes $p<q<r$. The maximum is $D_1$ when $(r-1) > (p-1)(q-1)$ and $A$ when $(r-1) < (p-1)(q-1)$.
\end{proof}
The minimum $r$ among the coefficients of $I(\varGamma(\mathbb{Z}_{pqr}),x)$ is $r=p q + q r + p r - (p + q + r)$, since the coefficient of linear term in $ I(\varGamma(\mathbb{Z}_{pqr}),x)$ is 
\begin{align*}
	-&((p-1) (q+r-2))+(p-1) (q+r-1)-(r-1) (p+q-2)-(q-1) (p+r-2)\\
	&+(r-1) (p+q-1)+(q-1) (p+r-1)-2 (p+q+r)+p q+p r+q r+3\\
	&= p q + q r + p r - (p + q + r).
\end{align*}
By applying the Eneström-Kakeya theorem to $I(\varGamma(\mathbb{Z}_{pqr}),x), $ we have the following result.
\begin{theorem}\rm\label{kakeya}
	The zeros of the polynomial $I(x)$ lie in the annulus region 
	$$\frac{1}{p q + q r + p r - (p + q + r)} \leq |Z| \leq R,$$ where $R$ is given as:
	\begin{enumerate}
		\item If $p=2$, then $R=(r-1)(p+q-1).$
		\item If $p \ge 3$, then $R=(r-1)(p+q-1)$ provided $r-1 > (p-1)(q-1)$.
		\item  If $p \ge 3$, then $R= pq+qr+pr-2(p+q+r)+3$ provided $r-1 < (p-1)(q-1)$.
	\end{enumerate}
\end{theorem}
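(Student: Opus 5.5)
The plan is to obtain the two radii by different arguments: the inner radius from the Eneström–Kakeya bound (in its sharper ratio form) together with a general counting inequality for independent sets, and the outer radius by comparison with Theorem \ref{zeros pqr}. We need not compute all the ratios $e_i/e_{i+1}$ explicitly, which would be painful for the seven-term expression of Theorem \ref{in poly of zpqr}.

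Write $I(x)=I(\varGamma(\mathbb{Z}_{pqr}),x)=\sum_{k\ge 0}s_kx^k$ with $s_k>0$ the number of independent sets of size $k$, and let $N=s_1=|V(\varGamma(\mathbb{Z}_{pqr}))|$. Summing the part sizes in the proof of Theorem \ref{in poly of zpqr} gives $N=pq+qr+pr-(p+q+r)$, matching the linear coefficient computed just before the statement.

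\emph{Inner radius.} We use the combinatorial inequality $(k+1)s_{k+1}\le (N-k)s_k$. Each independent $(k+1)$-set arises from exactly $k+1$ pairs (independent $k$-set, added vertex), and each $k$-set admits at most $N-k$ added vertices. Hence $s_{k+1}/s_k\le N$ for all $k$, that is,
\[
\min_i \frac{s_i}{s_{i+1}}\ \ge\ \frac{1}{N}=\frac{s_0}{s_1}.
\]
Thus the minimum ratio is attained at $i=0$. Since all coefficients are positive, the Eneström–Kakeya theorem as quoted gives $|Z|\ge 1/N=1/(pq+qr+pr-(p+q+r))$ for every zero $Z$. This step is valid for any graph, and it is where the statement's lower bound comes from.

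\emph{Outer radius.} Rather than bounding $\max_i s_i/s_{i+1}$, which would require control of consecutive binomial sums with signs, we use Theorem \ref{zeros pqr}. Every zero is either $-1$ or satisfies $|1+Z|<2$, so $|Z|\le |1+Z|+1<3$ in all cases. The preceding Proposition identifies $R$ in each of the three cases as the degree of $I(x)$: $D_1=(r-1)(p+q-1)$ when $p=2$ or $r-1>(p-1)(q-1)$, and $A=pq+qr+pr-2(p+q+r)+3$ when $r-1<(p-1)(q-1)$. Both quantities are at least $3$, since $r-1\ge 4$ forces $D_1\ge 4\cdot 3=12$ and $A\ge D_1$ in the third case. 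Therefore $|Z|<3\le R$, which gives the outer bound in every case.

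The main obstacle is the inner bound. A naive estimate $s_k\le N^k/k!$ only gives zero-freeness in a disk of radius about $\ln 2/N$. So the key point is to obtain the sharper per-step ratio bound $s_{k+1}\le N s_k$ and feed it into the Eneström–Kakeya theorem. One must check that the paper's version of that theorem (with min and max over consecutive ratios) is applied to a polynomial with strictly positive coefficients, including the leading one. For the degree case analysis, we simply cite the Proposition.
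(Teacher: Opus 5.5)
Your proof is correct. The inner radius follows the paper's route, but the outer radius uses a different argument.

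The paper gives no separate proof. It applies the Enestr\"om--Kakeya theorem, reading off $s_0/s_1=1/N$ from the linear coefficient and taking $R$ to be the degree from the preceding Proposition. That degree coincides with the last ratio $s_{n-1}/s_n$. The paper never checks that these are actually the minimum and maximum of the consecutive ratios $s_i/s_{i+1}$.

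For the lower bound, your double-counting inequality $(k+1)s_{k+1}\le (N-k)s_k$ supplies exactly that missing verification. It also shows that $|Z|\ge 1/|V(G)|$ holds for every graph.

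For the upper bound, you bypass Enestr\"om--Kakeya and obtain $|Z|<3$ from Theorem~\ref{zeros pqr}, whose proof is sound. You then only need $R\ge 3$; in fact $D_1=(r-1)(p+q-1)\ge 16$, a little better than your $12$.

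This is rigorous and much sharper, but it changes the content of the theorem. The case distinction for $R$ becomes irrelevant, since any $R\ge 3$ would do. You also do not show that $R$ is the Enestr\"om--Kakeya outer radius $\max_i s_i/s_{i+1}$, which the paper's later comparison of the two annuli presupposes. Establishing that would require $s_i\le n\,s_{i+1}$ for all $i$, with $n=\deg I$, for the seven-term sum; the paper also leaves this step unproved.
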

Let $P(x)=I(\varGamma(\mathbb{Z}_{pqr}),x)$ be the independence polynomial of zero divisor graph $\varGamma(\mathbb{Z}_{pqr}).$ Now, with $p=2,q=3$ and $5=5$, by Eneström-Kakeya theorem, the zeros of $P(x) $ lie in the annular region $\tfrac{1}{21}\leq |x|\leq 16$, which is much bigger region as compared to one given in Theorem \ref{zeros pqr}. Thus, Theorem \ref{zeros pqr} gives better bounds as compared to Theorem \ref{kakeya}. \vskip 2mm

As the coefficients of $I(\varGamma(\mathbb{Z}_{pqr}),x)$ are positive, and it is known that a positive log-concave sequence is unimodal \cite{stanley}. So, if we prove log-concavity of $I(\varGamma(\mathbb{Z}_{pqr}),x),$ then the unimodal proper will follow. Also, each   each binomial term in $ I(\varGamma(\mathbb{Z}_{pqr}),x)$ is log-concave due to symmetric behaviour of binomial coefficients. However sum of two log-concave polynomial is not necessarily log-concave. But due to symmetric behaviour of binomial terms in $I(\varGamma(\mathbb{Z}_{pqr}),x),$ it looks like polynomial is log-concave and thereby unimodal. We leave it as a problem. 
\begin{problem}
	Let $I(\varGamma(\mathbb{Z}_{pqr}),x)$ be the independence polynomial of $\varGamma(\mathbb{Z}_{pqr})$, with primes $p<q<r$. Then prove that $I(\varGamma(\mathbb{Z}_{pqr}),x)$ is a unimodal and log-concave polynomial.
\end{problem}
\section{Conclusion}
This article examines the independence polynomial $I(\varGamma(\mathbb{Z}_{n},x)$ of the zero divisor graph $\varGamma(\mathbb{Z}_{n})$ of commutative rings $\mathbb{Z}_{n}$ for $n \in \{p,p^{2},p^{3},pq,p^{2}q,pqr\}$, where $p<q<r$ are prime integers. The unimodal and log-concave properties of the previously stated graphs are analyzed. The positions of the zeros $I(\varGamma(\mathbb{Z}_{n}),x)$ in the plane are delineated, supplemented by pictures obtained from numerical computations. The independence polynomial's application can be broadened to further values of $n$ in the graph $\varGamma(\mathbb{Z}_{n})$, and its components may be explored in subsequent research.

\section*{Declarations}
\noindent \textbf{Data Availability:}	There is no data associated with this article.

\noindent \textbf{Funding:} The authors did not receive support from any organization for the submitted work.

\noindent \textbf{Conflict of interest:} The authors have no competing interests to declare that are relevant to the content of this article. 

\noindent\textbf{Acknowledgement:} 
The authors acknowledge the use of Wolfram Mathematica (Version 13.1) for generating the figures presented in this paper. Additionally, online TeX tools were employed for drawing TikZ figures. QuillBot was used for language polishing and grammar checking of this manuscript.

\noindent\textbf{Note:} For any comments and suggestions regarding this article, please feel free to contact at \href{mailto:bilalahmadrr@gmail.com}{bilalahmadrr@gmail.com}.


\begin{thebibliography}{0}
	\bibitem{alavi} Y. Alavi, P. J. Malde, A. J. Schwenk and P. Erd\"os, The vertex independence sequence of a graph is not constrained, \emph{Congr. Numer.} \textbf{58} (1987) 15--23.
	\bibitem{alikhanizd} S. Alikhani and F. Aghaei, Domination polynomial and total domination polynomial of zero divisor graphs of commutative rings, (2019), \url{https://arxiv.org/abs/2404.13539}.
	\bibitem{anderson2019} {D. F. Anderson and D. Weber, The zero divisor graph of a commutative ring without identity, \em Int. Elect. J. Algebra} {\bf 223} (2018) 176--202.
	\bibitem{Andersonbook} D. F. Anderson, T. Asir, A. Badawi and T. Tamizh Chelvam,
	\textit{Graphs from rings}, Springer, 2021.
	\bibitem{al} { D. F. Anderson and P. S. Livingston, The zero divisor graph of a commutative ring, \em J. Algebra} {\bf 217} (1999) 434--447.
	
	\bibitem{arunkumara} G. Arunkumara, P. J. Cameron, T. Kavaskar and T. T. Chelvam, Induced subgraphs of zero divisor graphs, \emph{Discrete Math.} \textbf{346} (2023) 113580. 
	
	\bibitem{barbeau} E. J. Barbeau, \textit{Polynomials}, Springer-Verlag, New York,1989.
	
	\bibitem{ib}  {I. Beck, Coloring of a commutative rings, \em J. Algebra} {\bf 116} (1988) 208--226.
	
	\bibitem{brown} J. I. Brown, K. Dilcher and R.J. Nowakowski, Roots of independence polynomials of well covered graphs, \emph{J. Algebraic Combin.} \textbf{11} (2000) 197--210.
	\bibitem{browncameron} J. I. Brown and P. J. Cameron, On the unimodality of independence polynomials of very well-covered graphs, \textit{Dicrete Math.} {\bf 341}(4) (2011) 1138--1143.
	
	\bibitem{asir2} T. T. Chelvam, T. Asir and K. Selvakumar, On dominations in graphs from commutative rings: A survey, Springer Proceedings in Mathematics \& Statistics-Algebra and its Applications ICAA, Vol 174 (2016) 363-380.
	\bibitem{ChudnovskySeymour2007}
	M.~Chudnovsky and P.~Seymour, The roots of the independence polynomial of a clawfree graph, \emph{J.\ Combin.\ Theory Ser.\ B} \textbf{97} (2007), 350--357.
	\bibitem{Hastad99}
	J.~H{\aa}stad,
	\newblock Clique is hard to approximate within $n^{1-\varepsilon}$,
	\newblock \emph{Acta Mathematica} \textbf{182} (1999), 105--142.
	\bibitem{goddard} W. Goddard and M. A. Henning, Independent domination in graphs: a survey and recent results, \emph{Discrete Math.} \textbf{313} (2013) 839--854.
	\bibitem{garey1979}
	M. R. Garey and D. S. Johnson, \textit{Computers and Intractability: A Guide to the Theory of NP-Completeness}, W. H. Freeman and Company, New York,  1979.
	\bibitem{gross} J. L. Gross,T. Mansour, T. W. Tucker and D. G. L. Wang, Log-concavity of combinations of sequences and applications to genus distributions, \textit{SIAM J. Discrete Math.} \textbf{29}(2) (2015) 1002--1029.
	
	\bibitem{gutman} I. Gutman and F. Harary, Generalizations of the matching polynomials, \emph{Utilitas Math.} \textbf{24} (1983) 97--106.
	\bibitem{gursoy} N. K. G\"ursoy, A. \"Ulker and A. G\"ursoy, Independent domination polynomial of zero divisor graphs of commutative rings, \emph{Soft Comp.}  \textbf{26}(15) (2022) 6989--6997.
	
	\bibitem{haynes} T. W. Haynes, S. T. Hedetniemi and P. J. Slater, \emph{Fundamentals of Domination in Graphs}, Marcel Dekker, New York, 1998.
	\bibitem{heilmann} O. J. Heilmann and E. H. Lieb, Theory of monomer-dimer systems, \textit{Commun. Math. Phy.} \textbf{25}(3) (1972) 190--232.
	\bibitem{huh} J. Huh, Milnor numbers of projective hypersurfaces and the chromatic polynomial of a graph, \textit{J. Amer. Math. Soc.} \textbf{25} (2012) 907--927.
	\bibitem{huh1} J. Huh, $h$-Vectors of matroids and logarithmic concavity, \textit{Adv. Math.} \textbf{270} (2015) 49--59.
	
	\bibitem{karp1972} R. M. Karp, Reducibility among combinatorial problems. In R. E. Miller \& J. W. Thatcher (Eds.), \textit{Complexity of Computer Computations} (pp. 85–103). Plenum Press 1972.
	\bibitem{levitsurvey} V. E. Levit and E. Mandrescu, The independence polynomial of a graph-a survey, in: \emph{Proc. Int. Con. Algeb. Inf. Aristotle Univ. Thessaloniki, Greece} (2005) 233--254.
	\bibitem{levitdiscrete} V. E. Levit and E. Mandrescu, On the independence polynomial of the corona of graphs, \emph{Discrete Appl. Math.} \textbf{203} (2016) 85--93.
	\bibitem{levitejc} V. E. Levit and E. Mandrescu, Independence polynomials of well-covered graphs: generic counterexamples for the unimodality conjecture, \textit{Europ. J. Combin.} \textbf{27} (2006)  931--939.
	\bibitem{micheal} T. S. Michael Tand W. N. Traves, Independence sequences of well-covered graphs: non-unimodality and the roller-coaster conjecture, \textit{Graphs Combin.} \textbf{19} (2003) 403--411.
	
	\bibitem{bilaldml} B. A. Rather, Independent domination polynomial for the cozero divisor graph of the ring of integers modulo $n$, \emph{Discrete Math. Lett.} \textbf{13} (2024) 36--43.
	
	\bibitem{bilaltcs} B. A. Rather, Complex zeros and log-concavity in independent domination polynomials of zero divisor graphs of commutative rings,  \emph{Theoret. Comput. Sci.} \textbf{1058} (2025) 115594.
	
	\bibitem{bilalsc}  B. A. Rather, Complex Zeros of Independent Domination Polynomials of Zero Divisor Graphs, \emph{Soft Comput.} \textbf{30} (2026) 3009--3022. 
	
	\bibitem{bilaljcmcc} B. A. Rather, On domination polynomials of some graphs, \textit{J. Combin. Math. Combin. Comp.} \textbf{126} (2025) 279--289.

	\bibitem{bilaljac} B. A. Rather, J. Wang and F. Balarado, Independent domination polynomial of binary sequence graphs, \emph{J. Algebraic Combin.} \textbf{63} (2026), Art. No. 57. 

	\bibitem{stanley} R. P. Stanley, Log-concave and unimodal sequences in algebra, combinatorics and geometry, Graph Theory and Its Applications East and West: Proceedings of the First China‐USA International Graph Theory Conference, Annals New York Academy Sci. \textbf{567}(1) (1989) 500--535. 
	\bibitem{Weitz} D.~Weitz, Counting independent sets up to the tree threshold, STOC 06: Proceedings of the thirty-eighth annual ACM symposium on Theory of Computing, (2006)  140--149.
	
	
	
\end{thebibliography}
\end{document}